\documentclass[hidelinks,onefignum,onetabnum]{siamart251104}

\usepackage{lipsum}
\usepackage{amsfonts}
\usepackage{graphicx}
\usepackage{epstopdf}
\usepackage{algorithmic}
\ifpdf
  \DeclareGraphicsExtensions{.eps,.pdf,.png,.jpg}
\else
  \DeclareGraphicsExtensions{.eps}
\fi

\usepackage[T1]{fontenc}
\usepackage[utf8]{inputenc}
\usepackage{mathtools}
\usepackage{amssymb}
\usepackage{booktabs}
\usepackage{multirow}
\usepackage{siunitx}

\newsiamremark{remark}{Remark}
\newsiamremark{hypothesis}{Hypothesis}
\crefname{hypothesis}{Hypothesis}{Hypotheses}
\newsiamthm{claim}{Claim}
\newsiamremark{fact}{Fact}
\crefname{fact}{Fact}{Facts}

\headers{Finite-Time Convergence Guarantees for Time-Parallel Methods}{G. A. Antonucci, R. A. Hauser, D. Samaddar, and J. Buchanan}

\title{Finite-Time Convergence Guarantees for Time-Parallel Methods\thanks{Submitted to the editors on March 31, 2026.
\funding{This work has been (part-) funded by the EPSRC Energy Programme [grant number EP/W006839/1]. Raphael Hauser was supported by the Hong Kong Innovation and Technology Commission (InnoHK Project CIMDA) and by EPSRC grant EP/Y028872/1 Mathematical Foundations of Intelligence: An ``Erlangen Programme'' for AI. To obtain further information on the data and models underlying this paper please contact \email{PublicationsManager@ukaea.uk}.}}}

\author{Giancarlo A. Antonucci\protect\footnotemark[2]\protect\hspace{3.5pt}\protect\footnotemark[3] \and Raphael A. Hauser\thanks{Mathematical Institute, University of Oxford, Andrew Wiles Building, Woodstock Road, Oxford, OX2 6GG, United Kingdom of Great Britain and Northern Ireland (\email{giancarlo.antonucci@maths.ox.ac.uk}, \email{raphael.hauser@maths.ox.ac.uk})} \and Debasmita Samaddar\thanks{CCFE, Culham Science Centre, Abingdon, OX14 3DB, United Kingdom of Great Britain and Northern Ireland (\email{giancarlo.antonucci@ukaea.uk}, \email{debasmita.samaddar@ukaea.uk}, \email{james.buchanan@ukaea.uk})} \and James C. Buchanan\protect\footnotemark[3]}

\usepackage{amsopn}

\ifpdf
\hypersetup{
  pdftitle={Finite-Time Convergence Guarantees for Time-Parallel Methods},
  pdfauthor={G. A. Antonucci, R. A. Hauser, D. Samaddar, and J. Buchanan}}
\fi

\begin{document}

\maketitle

\begin{abstract}
  Time-parallel algorithms, such as Parareal, are well-understood for linear problems, but their convergence analysis for nonlinear, chaotic systems remains limited. This paper introduces a new theoretical framework for analysing time-decomposition methods as contraction mappings that converge in a finite number of iterations. We derive a finite-time guarantee linking the initial error, convergence rate, and iteration count, defined via a geometric outer--inner-ball condition. We apply this framework to Parareal, deriving explicit estimates for the convergence factor $\beta$ on nonlinear problems and showing it scales as $\mathcal{O}(h^2)$ when the macroscopic time grid is uniformly refined. Further, we address the failure of standard convergence criteria in chaotic regimes by introducing a \emph{proximity function}. This chaos-aware criterion weighs solution discontinuities by the system's Lyapunov exponent (or the solver's Lipschitz constant), allowing the algorithm to converge to the correct statistical attractor without enforcing futile pointwise accuracy on divergent trajectories. Numerical experiments on the Logistic, Lorenz, and Lorenz-96 systems demonstrate that this approach decouples the iteration count from the total simulation time. By isolating the intrinsic mathematical bounds from hardware-dependent overheads, we establish that the method is strictly algorithmically scalable.
\end{abstract}

\begin{keywords}
  Parareal, time-parallel integration, parallel-in-time integration, chaotic dynamics, convergence analysis, Lyapunov exponents, contraction mappings
\end{keywords}

\begin{AMS}
    65L05, 65Y05, 37M05, 65P20
\end{AMS}

\section{Introduction} \label{sec:intro}

Time-parallel time integration methods have garnered significant attention as a means to circumvent the sequential bottleneck of traditional time-stepping schemes. Among these, the Parareal algorithm \cite{lions_resolution_2001} is arguably the most widely studied. While there is a vast body of work analysing the convergence of Parareal for linear diffusive and dispersive problems \cite{gander_analysis_2007, staff_stability_2005}, the analysis for nonlinear, and specifically chaotic, problems remains fragmented.

In this paper, we present a new framework for analysing time-parallel algorithms when applied to nonlinear systems. We view time-decomposition methods as contraction mappings -- iterative processes that strictly shrink the distance between points -- that must converge within a fixed, finite computational budget. This turns the traditional asymptotic convergence rate into a finite-time guarantee. This perspective is critical for two reasons. First, practical efficiency requires the number of iterations ($K$) to be small compared to the number of processors ($N$). Our framework provides explicit bounds to predict $K$. Second, it provides the theoretical foundation for adaptive algorithms by proving that the solver stays `on track' even when solving chaotic systems over long domains.

We apply this framework to Parareal, deriving explicit estimates for its convergence rate on nonlinear problems. We demonstrate that the convergence factor scales as $\mathcal{O}(h^2)$ for small step sizes $h$. Whilst these bounds are conservative for moderate $h$, they become increasingly accurate as $h$ decreases, a regime often necessitated by the physics of turbulent simulations.

It is vital to stress that this work focuses wholly on the algorithmic bounds of the method. We establish theoretical limits for the iteration count and the algorithmic speedup limit. The distinct engineering task of building a compiled, low-latency implementation to measure raw wall-clock time falls outside the scope of this mathematical analysis, as hardware and communication overheads would obscure the intrinsic contraction properties we aim to prove.

Furthermore, we tackle a practical barrier in simulating chaotic systems: the definition of `convergence' itself. For chaotic systems, pointwise convergence to a specific trajectory over long times is often impossible due to the exponential growth of initial errors (the `butterfly effect'). We argue that the Lax equivalence theorem, which underpins our trust in numerical solvers, must be lifted from individual trajectories to statistical structures (invariant measures and attractors). Consequently, we introduce the \emph{proximity function}, a weighted measure that accounts for exponential divergence. We show that this chaos-aware criterion significantly reduces iteration counts while preserving statistical accuracy (measured via the Wasserstein distance), outperforming standard residual checks.

\section{Problem formulation and fixed-point structure}
\label{sec:generalities}

Consider an initial-value problem defined over a finite time domain $[0, T]$. We divide this domain into $N$ time chunks $[T_{n-1}, T_n]$ of equal size $\Delta T = T/N$ for $n = 1, \dots, N$. Time-parallel algorithms aim to solve this problem by combining two propagators: a fine solver $\mathcal{F}$, which is accurate but computationally expensive and is applied independently to each chunk in parallel, and a coarse solver $\mathcal{G}$, which is cheap to compute but inaccurate and is applied sequentially across the whole domain to patch the parallel trajectories together.

From a general viewpoint, the goal of every time-parallel algorithm is to find a vector $U^* \in \mathbb{R}^{d(N+1)}$ that solves the root-finding problem
\begin{equation} \label{eq:rootfinding}
    \mathcal{E}_\mathcal{F}(U) =
    \begin{bmatrix}
        U_0 - u_0 \\
        U_1 - \mathcal{F}(U_0) \\
        \vdots \\
        U_N - \mathcal{F}(U_{N-1})
    \end{bmatrix}
    = 0 ,
\end{equation}
where $d$ is the dimension of the underlying problem, $N$ is the number of time chunks, and $\mathcal{F}$ represents the action of the fine solver over a single time chunk. The vector $U = [U_0^\top, U_1^\top, \ldots, U_N^\top]^\top$ collects all the interface values between time chunks, with each $U_n \in \mathbb{R}^d$ representing the solution approximation at time $T_n = n \Delta T$.

The system \cref{eq:rootfinding} is block-bidiagonal and lower triangular. In a serial context, it is solved via forward substitution: computing $U_1$ from $U_0$, then $U_2$ from $U_1$, and so forth. Time-parallel methods can be viewed as iterative schemes designed to solve this nonlinear algebraic system without strictly sequential propagation.

\subsection{Existence, uniqueness, and the shadowing interpretation}

We assume that the fine solver defines a unique solution $U^*$. This assumption rests on standard results from numerical analysis \cite{suli_introduction_2003}. Provided the underlying initial value problem (IVP) is well-posed (typically requiring the vector field $f$ to be Lipschitz continuous) and the fine solver $\mathcal{F}$ is consistent and stable, the discrete solution exists and is unique for sufficiently small step sizes $h$.

However, for chaotic systems, the physical interpretation of $U^*$ requires careful nuance. Even if the IVP is well-posed, the exponential sensitivity to initial conditions imposes a fundamental predictability horizon. An initial discretization error $\epsilon$ grows as $\epsilon e^{\lambda t}$, where $\lambda > 0$ is the leading Lyapunov exponent. This implies that machine precision errors grow to $\mathcal{O}(1)$ after a time $t_{\text{pred}} \sim \lambda^{-1} \ln(1/\epsilon)$ \cite{lorenz_reply_2008, gander_nonlinear_2008}. Consequently, for integration times $T \gg t_{\text{pred}}$, pointwise convergence of the numerical trajectory to the specific physical trajectory $u(t)$ is impossible, regardless of the order of the method.

This raises a fundamental question: if pointwise accuracy is lost, in what sense does the fine solver `converge' to the true physics, and why should $U^*$ be the target of our parallel algorithm?

We rely on the property that for a consistent and stable numerical method, the Lax equivalence theorem remains meaningful if the notion of convergence is lifted from individual trajectories to the statistical structures of the system. Specifically, as $h \to 0$:
\begin{enumerate}
    \item The \emph{numerical attractor} $A_h$ generated by the discrete map converges to the true attractor $\mathcal{A}$ in the Hausdorff distance.
    \item The \emph{invariant measure} $\mu_h$ supported on $A_h$ converges weakly to the true invariant measure $\mu$.
\end{enumerate}
Therefore, while the fine solution $U^*$ may diverge from the specific trajectory $u(t)$ defined by $u_0$, it represents a \emph{shadowing trajectory}, i.e., a valid pseudo-orbit\footnote{A pseudo-orbit is a sequence of discrete states that approximately follows the system's governing dynamics, typically with small discontinuous jumps at each time chunk interface.} that stays close to the true attractor and reproduces the correct long-term statistics (Lyapunov exponents, correlation functions, moments). In this framework, the goal of the time-parallel solver is not to recover $u(t)$ (which is lost to chaos), but to recover $U^*$ exactly, as $U^*$ is the statistically faithful representative of the physics defined by the fine discretization. However, as \Cref{sec:proximity} will show, enforcing strict pointwise convergence to $U^*$ over long time horizons is practically futile for chaotic systems. Ultimately, the time-parallel solver must seek a secondary pseudo-orbit that shadows $U^*$, staying on the same numerical attractor without forcing exact pointwise agreement.

\subsection{Fixed-point formulation}

Regardless of specific algorithmic details, we can view any time-parallel method as an iterative process
\begin{equation} \label{eq:iteration}
    U^k = U^{k-1} + v(U^{k-1}) ,
\end{equation}
where $v : \mathbb{R}^{d(N+1)} \to \mathbb{R}^{d(N+1)}$ is a vector field encoding the particular algorithm. This generates a sequence $(U^k)_{k \in \mathbb{N}_0}$ starting from an initial guess $U^0$ (typically provided by a serial coarse solver).

For the Parareal algorithm, the update is defined component-wise. For the $n$-th time chunk, the update rule is:
\begin{equation} \label{eq:parareal_update}
    U_n^k = \mathcal{G}(U_{n-1}^k) + \mathcal{F}(U_{n-1}^{k-1}) - \mathcal{G}(U_{n-1}^{k-1}) ,
\end{equation}
where $\mathcal{G}$ is a computationally cheap, coarse approximation of $\mathcal{F}$. This can be rewritten in the update form $U^k = U^{k-1} + v(U^{k-1})$ by defining the components of $v$ recursively:
\begin{equation}
    v_n(U) = \mathcal{G}(U_{n-1} + v_{n-1}(U)) - U_n + \bigl(\mathcal{F}(U_{n-1}) - \mathcal{G}(U_{n-1})\bigr) ,
\end{equation}
with the boundary condition $v_0(U) = 0$.

It is worth noting that this formulation can be interpreted as an approximate, or inexact, Newton method \cite{dembo_inexact_1982}. If we apply Newton's method to \cref{eq:rootfinding}, the update is $U^k = U^{k-1} - [\mathcal{D}\mathcal{E}_\mathcal{F}(U^{k-1})]^{-1} \mathcal{E}_\mathcal{F}(U^{k-1})$. Parareal essentially approximates the expensive Jacobian $\mathcal{D}\mathcal{F}$ with the cheap Jacobian $\mathcal{D}\mathcal{G}$ in the preconditioner, allowing for parallel evaluation of the residual while solving the correction serially.

Our first requirement for the analysis is that the fine solution is indeed a fixed point of this iteration.

\begin{proposition} \label{prop:unique_fixed_point}
    Assume that a time-parallel algorithm given by the iterative scheme in \cref{eq:iteration} has the property that, starting from any initial guess $U^0$, it recovers the fine solution exactly after at most $N$ iterations. Then $U^*$ is the unique fixed-point of $v$.
\end{proposition}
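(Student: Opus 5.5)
The plan is to read ``fixed point of the iteration'' as a zero of $v$: a point $U$ is fixed by \cref{eq:iteration} exactly when $U = U + v(U)$, that is, when $v(U) = 0$. So the proof reduces to two claims: $v(U^*) = 0$ (existence), and $v(U) = 0$ forces $U = U^*$ (uniqueness). Both should follow from the single hypothesis that every orbit $(U^k)_{k\in\mathbb{N}_0}$ satisfies $U^k = U^*$ for some $k \le N$.

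\emph{Uniqueness.} Take any $\bar U$ with $v(\bar U) = 0$ and start the iteration at $U^0 = \bar U$. By induction on $k$, $U^k = U^{k-1} + v(U^{k-1}) = \bar U$ for every $k$, so the orbit is constant. The hypothesis says that $U^k = U^*$ for some $k \le N$. Hence $\bar U = U^*$.

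\emph{Existence.} Here I must show that $U^*$ is itself a zero of $v$. The hypothesis ``recovers the fine solution exactly after at most $N$ iterations'' I would read, as is standard for Parareal, to mean that $U^k = U^*$ for every $k \ge N$, and not merely that the orbit passes through $U^*$ once. Under that reading, start from any $U^0$. Then $U^N = U^{N+1} = U^*$, and therefore
\begin{equation*}
  v(U^*) = U^{N+1} - U^N = 0 .
\end{equation*}
An alternative is to start directly at $U^0 = U^*$. The orbit must reach $U^*$ at some step and then stay there, which again gives $v(U^*) = 0$.

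\emph{Main obstacle.} This is the existence step. If the hypothesis only guaranteed that the orbit visits $U^*$ at some $k \le N$, then $U^*$ could lie on a cycle of the map $U \mapsto U + v(U)$, and $v(U^*) = 0$ would not follow. I would therefore state explicitly the persistence reading $U^k = U^*$ for all $k \ge N$. For Parareal this reading can also be checked directly from the component formula for $v$. Substituting $U^*_n = \mathcal{F}(U^*_{n-1})$ and $U^*_0 = u_0$ gives the recursion
\begin{equation*}
  v_n(U^*) = \mathcal{G}\bigl(U^*_{n-1} + v_{n-1}(U^*)\bigr) - \mathcal{G}(U^*_{n-1}) .
\end{equation*}
With the base case $v_0(U^*) = 0$, induction on $n$ gives $v_n(U^*) = 0$ for all $n$, which settles existence for Parareal independently of how the hypothesis is read.
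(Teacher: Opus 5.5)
Your proof is correct and matches the paper's: existence follows from $U^N = U^{N+1} = U^*$, which forces $v(U^*) = 0$, and uniqueness follows by starting the iteration at any other zero of $v$, whose constant orbit must equal $U^*$ by step $N$. Your point that existence needs the persistence reading ($U^k = U^*$ for all $k \ge N$) is exactly the reading the paper adopts at the start of its proof, and your direct check that $v(U^*) = 0$ for Parareal is a valid extra that the paper does not include.
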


\begin{proof}
    Since the algorithm converges in at most $N$ iterations, we have $U^k = U^*$ for all $k \ge N$. Consider the iteration from step $k$ to $k+1$ for any $k \ge N$. The update rule \cref{eq:iteration} implies $U^* = U^* + v(U^*)$, which holds if and only if $v(U^*) = 0$. Thus, $U^*$ is a fixed point.
    
    To show uniqueness, assume $U$ is any other fixed point such that $v(U) = 0$. If we initialize the algorithm with $U^0 = U$, the update rule gives $U^1 = U^0 + v(U^0) = U$. By induction, $U^k = U$ for all $k \ge 0$. However, the finite termination property guarantees that $U^N = U^*$. Therefore, we must have $U = U^*$, making it the unique fixed point. \quad
\end{proof}

\subsection{The geometry of the basin of attraction}

Convergence analysis hinges on the \emph{basin of attraction} of $U^*$, defined as:
\begin{equation}
    \mathcal{A}(U^*) = \{U^0 \in \mathbb{R}^{d(N+1)} : U^k \to U^* \text{ as } k \to \infty\} .
\end{equation}
For linear problems, where the error propagation is governed by a matrix with spectral radius less than one, $\mathcal{A}(U^*)$ is typically the entire space $\mathbb{R}^{d(N+1)}$, guaranteeing global convergence.

For nonlinear problems, however, the basin may be a proper subset of the space. In the specific case of chaotic systems, the geometry of $\mathcal{A}(U^*)$ can be surprisingly complex. As the integration time step $\Delta T$ increases, the map $U_{n-1} \mapsto \mathcal{F}(U_{n-1})$ becomes increasingly nonlinear, effectively acting as a composition of high-degree polynomials (in the case of Runge--Kutta methods). This folding and stretching of phase space can render the basin of attraction fractal \cite{mcdonald_fractal_1985}.

Consequently, for chaotic dynamics, we cannot assume that an arbitrary initial guess $U^0$ generated by a coarse solver lies within $\mathcal{A}(U^*)$. This motivates the need for the constructive framework presented in \Cref{sec:framework}, where we define explicit conditions based on outer and inner balls to guarantee that the initial guess lands in a local convex neighbourhood of $U^*$ where contraction is assured.

\section{A framework for finite-time convergence}
\label{sec:framework}

While the global basin of attraction $\mathcal{A}(U^*)$ determines asymptotic convergence, practical time-parallel integration requires a stricter guarantee: convergence to a specified tolerance within a fixed wall-clock limit. For linear problems, global convergence is often guaranteed. However, for nonlinear and chaotic problems, the basin of attraction can exhibit fractal boundaries (as seen in \Cref{fig:ErrorContoursLorenz}), making the convergence of an arbitrary initial guess uncertain.

\begin{figure}[htbp!]
    \centering
    \includegraphics[width=\textwidth]{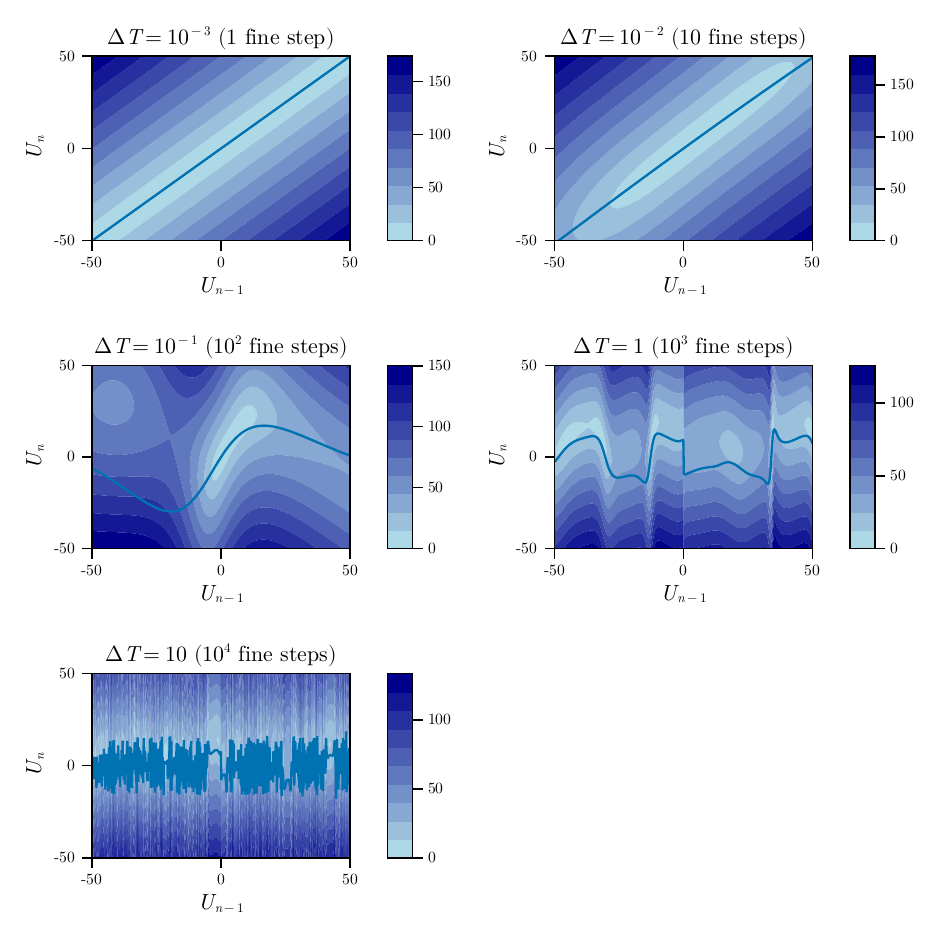}
    \caption{Structure of the basin of attraction for the Lorenz-63 system. The plot shows a 2D cross-section of the 3D phase space (the first coordinate of $U_n$, here). The colour map denotes the value of $\|U_n - \mathcal{F}(U_{n-1})\|$. As the integration time step increases, the basin develops a complex, fractal-like structure, highlighting the practical difficulty of guaranteeing convergence for arbitrary initial guesses.}
    \label{fig:ErrorContoursLorenz}
\end{figure}

To address this, we formalize the convergence behaviour and establish sufficient conditions for a local convex neighbourhood of $U^*$ to lie within the basin.

\subsection{Convergence rates and sufficient conditions}

Let $\|\cdot\|$ denote a $p$-norm on $\mathbb{R}^{d(N+1)}$. We characterize the convergence rate using the standard error bound $\| U^k - U^* \| \le \beta \| U^{k-1} - U^* \|^q$, where $\beta > 0$ is the convergence factor and $q \ge 1$ is the order. We adopt the formal definition from Ortega and Rheinboldt \cite{ortega_iterative_1970} to handle potentially non-monotonic error sequences typical of nonlinear iterations.

\begin{definition}[Q-convergence] \label{def:convergence_rate}
    Let $(U^k)$ be a sequence defined by \cref{eq:iteration} converging to $U^*$ with $U^k \neq U^*$ for all $k > k_0$. The sequence converges uniformly with rate $q^*$ if $q^*$ is the supremum of all $q \ge 1$ such that
    \begin{equation}
        Q_q(U^k) = \limsup_{k \to \infty}{\frac{\| U^k - U^* \|}{\| U^{k-1} - U^* \|^q}} < \infty .
    \end{equation}
    We say $(U^k)$ converges linearly if $q=1$ and $Q_1(U^k) < 1$, superlinearly if $Q_1(U^k) = 0$, and quadratically if $Q_2(U^k) < \infty$.
\end{definition}

The use of $\limsup$ is critical; for chaotic systems, the ratio of successive errors may oscillate due to local variations in the Jacobian eigenvalues, even if the overall trend is convergent.

To guarantee that such a rate exists locally, we rely on sufficient conditions adapted from \cite{cartis_new_2007}.

\begin{theorem}[Sufficient Conditions for Convergence] \label{thm:ostrovski}
    Let $\beta \in (0,1)$ and $\Omega \subset \mathbb{R}^{d(N+1)}$ be a convex open set containing $U^*$. If the update field $v(U)$ is continuously differentiable and satisfies
    \begin{enumerate}
        \item $v(U) = 0 \iff U = U^*$,
        \item $\|I + \mathcal{D} v(U)\| \le \beta$ for all $U \in \Omega$,
    \end{enumerate}
    then any sequence starting in $U^0 \in \Omega$ remains in $\Omega$ and converges to $U^*$ linearly with factor at most $\beta$.
\end{theorem}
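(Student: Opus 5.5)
The plan is to treat the iteration as the fixed-point map $\Phi(U) = U + v(U)$ and to show that $\Phi$ is a contraction on $\Omega$ with fixed point $U^*$. By hypothesis~1, $v(U^*)=0$, so $\Phi(U^*) = U^*$. Because $v$ is $C^1$, $\Phi$ is $C^1$ with Jacobian $\mathcal{D}\Phi(U) = I + \mathcal{D}v(U)$. Hypothesis~2 then gives $\|\mathcal{D}\Phi(U)\|\le\beta$ on $\Omega$, where the matrix norm is the one induced by the chosen $p$-norm.

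\textbf{Mean-value step.} Fix $U\in\Omega$. Convexity of $\Omega$ places the segment $U^* + t(U-U^*)$, $t\in[0,1]$, inside $\Omega$. The integral form of the mean value theorem gives
\begin{equation*}
  \Phi(U)-U^* = \Phi(U)-\Phi(U^*) = \int_0^1 \mathcal{D}\Phi\bigl(U^*+t(U-U^*)\bigr)\,(U-U^*)\,dt .
\end{equation*}
Taking norms and bounding the integrand pointwise by hypothesis~2 yields
\begin{equation*}
  \|\Phi(U)-U^*\|\le\beta\|U-U^*\| .
\end{equation*}
The vector-valued integral form is used deliberately, since the scalar mean value theorem fails for vector maps.

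\textbf{Invariance.} This is the main obstacle. The estimate above only shows that $\Phi(U)$ is closer to $U^*$ than $U$ is. That does not by itself keep $\Phi(U)$ inside an arbitrary convex open $\Omega$, because $\Omega$ need not be star-shaped in balls around $U^*$. I would try to handle this in one of two ways:
\begin{itemize}
  \item read the statement as implicitly taking $\Omega$ to be a norm ball $B(U^*,r)$, which matches the outer--inner-ball framing the paper announces; or
  \item restrict attention to the largest ball $B(U^*,r)\subseteq\Omega$ and assume $U^0$ lies in it.
\end{itemize}
Either way, for $U\in B(U^*,r)$ we get $\|\Phi(U)-U^*\|\le\beta r<r$. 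So $\Phi$ maps the ball into itself, and induction keeps every iterate $U^k$ in the ball and hence in $\Omega$. I would flag explicitly that "remains in $\Omega$" needs this ball (or sublevel-set) structure, or alternatively that $\Omega$ be invariant.

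\textbf{Conclusion.} Induction gives $\|U^k-U^*\|\le\beta\|U^{k-1}-U^*\|$, hence $\|U^k-U^*\|\le\beta^k\|U^0-U^*\|\to0$. Whenever the iterates are nonzero errors, the ratio in \Cref{def:convergence_rate} is bounded by $\beta$, so $Q_1\le\beta<1$ and the convergence is linear with factor at most $\beta$. Uniqueness of the limit is immediate from hypothesis~1, which is consistent with \Cref{prop:unique_fixed_point}.
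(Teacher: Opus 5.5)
Your contraction argument is correct. The paper gives no proof of this theorem (it only cites Cartis and Hauser), so there is no competing route to compare. Your argument is the standard one: the integral mean value theorem along the segment from $U^*$ to $U$, which lies in $\Omega$ by convexity, with the operator norm induced by the same $p$-norm. This gives $\|\Phi(U)-U^*\|\le\beta\|U-U^*\|$, and induction finishes. Only the direction $v(U^*)=0$ of hypothesis~1 is actually used.

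The point worth settling is your invariance worry, and it is fully justified. The statement is false for a general convex open $\Omega$, so restricting to a ball is a necessary correction, not a weakness of your proof.

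\textbf{Invariance fails.} Let $e$ be the first standard basis vector of $\mathbb{R}^{d(N+1)}$ and take $v(U)=-(1+\beta)(U-U^*)$. Then $\Phi(U)=U^*-\beta(U-U^*)$, $I+\mathcal{D}v\equiv-\beta I$, and $U^*$ is the only zero of $v$. With $0<\epsilon<\beta/4$, the slab $\Omega=\{U:-\epsilon<e^\top(U-U^*)<1\}$ is convex, open, and contains $U^*$. The start $U^0=U^*+\tfrac12 e\in\Omega$ is mapped to $U^1=U^*-\tfrac{\beta}{2}e\notin\Omega$.

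\textbf{Convergence fails too.} Replace $\beta$ by $s\bigl(e^\top(U-U^*)\bigr)$, where $s$ is smooth and positive, with $s\equiv\beta$ on $[-\epsilon,\infty)$ and $s\equiv 3/\beta$ on $(-\infty,-2\epsilon]$. Then $v=-(1+s)(U-U^*)$ still vanishes only at $U^*$, and $I+\mathcal{D}v=-\beta I$ on $\Omega$. But the iterates are $U^*+\tfrac12 e$, $U^*-\tfrac{\beta}{2}e$, $U^*+\tfrac32 e$, $U^*-\tfrac{3\beta}{2}e$, $U^*+\tfrac92 e,\dots$, which diverge.

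Your wording that $\Omega$ ``need not be star-shaped in balls'' is imprecise, since a convex $\Omega$ is star-shaped about $U^*$. What your estimate actually needs is that, for every $U\in\Omega$, $\Omega$ contains the closed ball of radius $\beta\|U-U^*\|$ about $U^*$ (or simply that $\Phi(\Omega)\subseteq\Omega$). Any ball centred at $U^*$ in the norm of hypothesis~2 satisfies this. With $\Omega=B_\rho(U^*)$ your proof is complete. This is exactly how the paper uses the result: the outer ball $B_R(U^*)$, described as ``a local convex neighbourhood $\Omega$ (a ball)''.
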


This theorem provides the theoretical justification for our geometric framework: even if the global basin is fractal, the differentiability of the solver ensures the existence of a local convex neighbourhood $\Omega$ (a ball) where the contraction mapping principle holds.

\subsection{Outer and inner balls}

We narrow the asymptotic definition to a finite-time perspective. Let $r$ be the user-specified tolerance. We define the \emph{$K$-iteration basin of attraction} as the set of inputs that converge within the computational budget $K$:
\begin{equation}
    \mathcal{A}_r(U^*; K) = \left\{ U^0 \in \mathbb{R}^{d(N+1)} : \| U^K - U^* \| \le r \right\} .
\end{equation}
To operationalize this, let $B_\rho(U^*) = \{ U \in \mathbb{R}^{d(N+1)} : \| U - U^* \| < \rho \}$ denote the open ball of radius $\rho$ centred at $U^*$. We define the \emph{outer ball} $B_R(U^*)$ as the largest ball of radius $R$ contained entirely within this set:
\begin{equation}
    R := \sup \{ \rho > 0 : B_\rho(U^*) \subseteq \mathcal{A}_r(U^*; K) \} .
\end{equation}
This definition provides a robust safe harbour. The inner ball $B_r(U^*)$ represents the target zone, holding all points within tolerance $r$ of the true solution. The outer ball $B_R(U^*)$ represents the maximum allowable error for the coarse solver. If the initial guess generated by the coarse solver lies within $B_R(U^*)$, convergence is guaranteed, regardless of the complex geometry of the global basin.

\subsection{Linking coarse accuracy to iteration budget}

We can now derive an explicit link between the required coarse accuracy ($R$), the iteration count ($K$), and the convergence rate ($\beta$). Applying the bound $\|e_k\| \le \beta \|e_{k-1}\|^q$ recursively yields:
\begin{equation}
    \| U^K - U^* \| \le \beta^{\frac{q^K - 1}{q - 1}} \| U^0 - U^* \|^{q^K} .
\end{equation}
For convergence orders $q > 1$, to ensure $\| U^K - U^* \| \le r$ given an initial guess on the boundary $\| U^0 - U^* \| = R$, we solve for $R$:
\begin{equation} \label{eq:rho_bound_general}
    R \le \left( \frac{r}{\beta^{\frac{q^K - 1}{q - 1}}} \right)^{1/q^K} .
\end{equation}
Taking the limit as $q \to 1$, the exponent in the denominator converges to $K$. Therefore, for the specific case of Parareal, which typically exhibits linear convergence ($q=1$) in the pre-asymptotic regime, this simplifies to:
\begin{equation} \label{eq:rho_bound_linear}
    R \le \frac{r}{\beta^K} .
\end{equation}
This simple relation dictates the design of the parallel system.
\begin{itemize}
    \item If $\beta \approx 1$ (slow convergence), then $R \approx r$. The coarse solver must be nearly as accurate as the fine solver, rendering parallelism inefficient.
    \item If $\beta \ll 1$ (fast convergence), $R \gg r$. The coarse solver can be significantly less accurate than the fine solver, allowing for cheaper coarse steps (larger $\Delta T$) and higher parallel efficiency.
\end{itemize}

In practice, a user estimates $\beta$ (e.g., via the theoretical bounds derived in \Cref{sec:parareal_bounds} or a short calibration run) and adjusts the coarse solver step size to ensure the initial error is bounded by $R$.

\section{Explicit convergence bounds for Parareal}
\label{sec:parareal_bounds}

In this section, we derive explicit bounds for the convergence factor $\beta$ for the Parareal algorithm applied to nonlinear problems. We assume sufficient smoothness such that the update field $v$ is differentiable. Our goal is to establish sufficient conditions for linear contraction, i.e., $\| I + \mathcal{D} v(U) \| \le \beta < 1$.

\begin{theorem} \label{thm:parareal_rate_beta}
    Let $\|\cdot\|$ be an induced norm. If $\mathcal{F}$ and $\mathcal{G}$ are continuously differentiable, the iteration is locally contractive. The Jacobian of the iteration satisfies $\| I + \mathcal{D} v(U) \| \le \beta$, where
    \begin{equation} \label{eq:parareal_beta_general_form}
        \beta = \left( \sup_{\substack{U \\ 1 \le n \le N}} \frac{\| \mathcal{D}\mathcal{G}(U_n) \|^N - 1}{\| \mathcal{D}\mathcal{G}(U_n) \| - 1} \right) \left( \sup_{\substack{U \\ 0 \le n \le N-1}} \| \mathcal{D}\mathcal{F}(U_n) - \mathcal{D}\mathcal{G}(U_n)\| \right) .
    \end{equation}
\end{theorem}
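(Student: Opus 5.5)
We compute the Jacobian $M(U) := I + \mathcal{D}v(U)$ of the update map $\Phi(U) = U + v(U)$ and bound its induced norm. Write $\Phi_n(U) = U_n^{\text{new}}$. The Parareal rule \cref{eq:parareal_update} gives $\Phi_0(U) = u_0$ and
\begin{equation*}
    \Phi_n(U) = \mathcal{G}(\Phi_{n-1}(U)) + \mathcal{F}(U_{n-1}) - \mathcal{G}(U_{n-1}) .
\end{equation*}
Let $G_n := \mathcal{D}\mathcal{G}$ evaluated at $\Phi_{n}(U)$ and $\Delta_n := \mathcal{D}\mathcal{F}(U_n) - \mathcal{D}\mathcal{G}(U_n)$. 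Differentiating by the chain rule gives the block recursion
\begin{equation*}
    \mathcal{D}\Phi_n = G_{n-1}\, \mathcal{D}\Phi_{n-1} + \Delta_{n-1} E_{n-1} ,
\end{equation*}
where $E_j$ selects the $j$-th block of $U$, and $\mathcal{D}\Phi_0 = 0$.

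Unrolling this recursion shows that $M(U)$ is strictly block lower triangular. Its $(n,j)$ block for $j<n$ is
\begin{equation*}
    G_{n-1}G_{n-2}\cdots G_{j+1}\,\Delta_j ,
\end{equation*}
with the empty product equal to $I$ when $j = n-1$. This is the familiar linearized Parareal error propagator: for $e^k = U^k - U^*$ it reads $e^k_n = \sum_{j<n} (\prod G)\,\Delta_j\, e^{k-1}_j$. Nilpotency of $M$ is consistent with the finite-termination assumption of \Cref{prop:unique_fixed_point}.

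To bound the norm, let $g := \sup \|\mathcal{D}\mathcal{G}\|$ and $\delta := \sup\|\Delta_j\|$, with the suprema taken as in \cref{eq:parareal_beta_general_form}. Split $M = L\,D$, where $D = \operatorname{diag}(\Delta_0,\dots,\Delta_{N-1},0)$ (suitably shifted) and $L$ is the block lower-triangular matrix of products of the $G$'s. Write $L = \sum_{m=0}^{N-1} S_m$, where $S_m$ collects the blocks on the $m$-th subdiagonal, each a product of $m$ factors $G$.

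For an induced $p$-norm, a matrix supported on a single block subdiagonal acts as a shifted block-diagonal map. Hence $\|S_m\| \le \max_{\text{blocks}}\|\prod G\| \le g^m$, by submultiplicativity. The triangle inequality then gives
\begin{equation*}
    \|L\| \le \sum_{m=0}^{N-1} g^m = \frac{g^N-1}{g-1} ,
\end{equation*}
and $\|D\|\le \delta$ likewise. Therefore $\|M\| \le \|L\|\,\|D\| \le \beta$. Local contractivity follows because $\Delta_j = \mathcal{D}\mathcal{F}-\mathcal{D}\mathcal{G}$ is small when $\mathcal{G}$ approximates $\mathcal{F}$, so $\beta<1$ on a neighbourhood. \Cref{thm:ostrovski} then applies, with condition 1 supplied by \Cref{prop:unique_fixed_point}.

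The main obstacle is justifying the bound $\|S_m\| \le \max\|\text{block}\|$ for a general induced $p$-norm, rather than just a block-$\infty$ norm. For a shifted block-diagonal operator I would verify it directly: $\|S_m x\|_p^p = \sum_n \|B_n x_{n-m}\|_p^p \le \max\|B_n\|^p \|x\|_p^p$. I would also need to handle the bookkeeping of where the $G$'s are evaluated: at the new iterates $\Phi_n(U)$ rather than at $U_n$. This is absorbed by taking the supremum over all $U$, as the statement does. Finally, $g=1$ is covered by reading the geometric sum as its limit $N$.
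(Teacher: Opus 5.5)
Your proposal is correct and is essentially the paper's proof: the paper also factors $I+\mathcal{D}v(U)$ into a transport matrix $C$ of products of $\mathcal{D}\mathcal{G}$'s times a block-diagonal source matrix of the $\mathcal{D}\mathcal{F}-\mathcal{D}\mathcal{G}$ blocks. Its bound $\|C\|\le\|Y\|\sum_{i=0}^{N-1}\|B\|^i$, obtained from $C=Y\sum_i B^i$, is the same estimate as your subdiagonal splitting $\|S_m\|\le g^m$, since each $YB^i$ is exactly one block subdiagonal of $C$. You also supply details the paper passes over: the block $p$-norm check for shifted block-diagonal operators, the fact that $\mathcal{D}\mathcal{G}$ is really evaluated at the updated iterates $\Phi_n(U)$ (harmless since the supremum runs over all $U$), and the convention $\Phi_0(U)=u_0$, which is what makes the first block row vanish (the literal $v_0\equiv 0$ would leave an identity block there).
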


\begin{proof}
    Recall that the Parareal update is given by $v_n(U) = \mathcal{G}(U_{n-1} + v_{n-1}(U)) - U_n + (\mathcal{F}(U_{n-1}) - \mathcal{G}(U_{n-1}))$. Differentiating with respect to $U$ yields a block lower-triangular Jacobian matrix. Let $D_n = \mathcal{D}\mathcal{F}(U_n) - \mathcal{D}\mathcal{G}(U_n)$ denote the local discrepancy Jacobian, and let $A_n = \mathcal{D}\mathcal{G}(U_n)$ denote the Jacobian of the coarse propagator.

    The matrix $I + \mathcal{D} v(U)$ factorizes as the product of a transport matrix $C$ and a source matrix $D$:
    \begin{equation}
        I + \mathcal{D} v(U) =
        \underbrace{
            \begin{bmatrix}
                0         &        &               &        &   \\[1ex]
                I_d       & \ddots &               &        &   \\[1ex]
                A_1       & \ddots & \ddots        &        &   \\[1ex]
                \vdots    & \ddots & \ddots        & \ddots &   \\[1ex]
                A_{N-1} \cdots A_1 & \cdots & A_{N-1} & I_d    & 0
            \end{bmatrix}
        }_C
        \underbrace{
            \begin{bmatrix}
                D_0 &     &        &         &     \\[1.5ex]
                    & D_1 &        &         &     \\[1.5ex]
                    &     & \ddots &         &     \\[1.5ex]
                    &     &        & D_{N-1} &     \\[1.5ex]
                    &     &        &         & 0
            \end{bmatrix}
        }_D.
    \end{equation}
    Taking norms, we have $\| I + \mathcal{D} v(U) \| \le \| C \| \| D \|$. The term $\|D\|$ is bounded by $\max_{n} \| D_n \|$, representing the maximum local error source.
    
    To bound $\|C\|$, we factorize it using a shift matrix $Y$ and a nilpotent matrix $B$:
    \begin{equation}
        C =
        \underbrace{
        \begin{bmatrix}
            0   &        &        &   \\
            I_d & \ddots &        &   \\
                & \ddots & \ddots &   \\
                &        & I_d    & 0
        \end{bmatrix}
        }_Y
        \sum_{i=0}^{N-1} B^i , \quad B =
        \begin{bmatrix}
            0       &        &        &   \\
            A_1     & \ddots &        &   \\
                    & \ddots & \ddots &   \\
                    &        & A_{N-1}  & 0
        \end{bmatrix}
        .
    \end{equation}
    Since $\|Y\| = 1$, we have $\| C \| \le \sum_{i=0}^{N-1} \|B\|^i$. Because $B$ is block-subdiagonal, its norm is simply $\|B\| = \max_n \|A_n\|$. Bounding $\|A_n\|$ by its supremum over all states and chunks, we evaluate the geometric sum to obtain the stated bound.
\end{proof}

The factor $\beta$ is composed of two distinct mechanisms:
\begin{enumerate}
    \item \textbf{Error Transport (First Term):} This term, driven by $\|\mathcal{D}\mathcal{G}\|$, describes how errors accumulate as they are propagated sequentially through time. If the coarse solver is unstable ($\|\mathcal{D}\mathcal{G}\| > 1$), this term grows exponentially with $N$.
    \item \textbf{Error Source (Second Term):} This term, $\|\mathcal{D}\mathcal{F} - \mathcal{D}\mathcal{G}\|$, represents the fresh error introduced at every iteration due to the mismatch between fine and coarse physics.
\end{enumerate}
For Parareal to converge, the product of these terms must be less than 1. Since the transport term typically grows with $N$, the source term must be made sufficiently small to compensate. We now quantify how small this source term is for Runge--Kutta methods. To rigorously capture the leading-order mismatch between the fine and coarse solvers, we evaluate the limit $h \to 0$ whilst holding the number of coarse steps per chunk $L$ and the coarsening ratio $\xi = H/h$, $H$ being the coarse step size, strictly constant. Consequently, the macroscopic time chunk $\Delta T = L \xi h$ shrinks proportionally with $h$. This isolates the local algorithmic truncation error from the macroscopic error transport, allowing us to find the parameter regime where the local contraction condition $\beta < 1$ holds.

\begin{theorem} \label{thm:parareal_beta_second_term}
    Assume $\mathcal{F}$ and $\mathcal{G}$ are Runge--Kutta methods of order $p_\mathcal{F} \ge 1$ and $p_\mathcal{G} \ge 1$, with fine step size $h$ and coarse step size $H = \xi h$. Let $L$ be the number of coarse steps per time chunk, such that the chunk size is $\Delta T = L H = L \xi h$. Assume that there exist non-negative scalars $\mu$ and $\nu$ such that $\|J(t)\| \le \mu$ and $\|J'(t)\| \le \nu$ for all $t \in [0, T]$. If
    \begin{equation}
        h < \frac{1}{\mu \, \max\{\|A_\mathcal{F}\|, \xi \|A_\mathcal{G}\|\}} ,
    \end{equation}
    then for this step size $h$:
    \begin{equation} \label{eq:parareal_explicit_bound}
        \sup_{\substack{U \\ 0 \le n \le N-1}} \| \mathcal{D}\mathcal{F}(U_n) - \mathcal{D}\mathcal{G}(U_n)\| \le L \xi (h \mu)^2 (C_1 + C_2 + C_3) + \mathcal{O}(h^3) ,
    \end{equation}
    where the constants are given by:
    \begin{align}
        C_1 &= \max\{\| A_\mathcal{F} \|, \xi \| A_\mathcal{G} \|\} \, (s_\mathcal{F} \| b_\mathcal{F} \| \| c_\mathcal{F} \| + s_\mathcal{G} \| b_\mathcal{G} \| \| c_\mathcal{G} \|) , \\
        C_2 &= \frac{\xi - 1}{2} \left( 1 + \frac{\nu}{\mu^2} \right) , \\
        C_3 &= | 1 + b_\mathcal{F}^\top c_\mathcal{F} | (\xi - 1) + | b_\mathcal{F}^\top c_\mathcal{F} - b_\mathcal{G}^\top c_\mathcal{G} | \xi (L + 1) .
    \end{align}
\end{theorem}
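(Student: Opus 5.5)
We work with the variational equations. Since $\mathcal{F}$ and $\mathcal{G}$ are Runge--Kutta maps, their Jacobians $\mathcal{D}\mathcal{F}(U_n)$ and $\mathcal{D}\mathcal{G}(U_n)$ are exactly the same Runge--Kutta schemes applied to the linearized system $\dot w = J(t) w$. Here $J(t)=\mathcal{D}f(u(t))$ is evaluated along the trajectory emanating from $U_n$ over the chunk $[T_n, T_n+\Delta T]$. The fine solver takes $L\xi$ steps of size $h$ and the coarse solver takes $L$ steps of size $H=\xi h$. We therefore compare two products of one-step matrices against a common reference, namely the exact propagator $\Phi$ of the linear ODE over each coarse step. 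Concretely, we telescope
\[
\mathcal{D}\mathcal{F}-\mathcal{D}\mathcal{G}=(\mathcal{D}\mathcal{F}-\Phi)-(\mathcal{D}\mathcal{G}-\Phi),
\]
and in both products we use the standard identity $\prod_j P_j-\prod_j Q_j=\sum_j (\prod_{i>j} P_i)(P_j-Q_j)(\prod_{i<j} Q_i)$. Because $\Delta T=L\xi h\to 0$, every factor $\prod P_i$ equals $I+\mathcal{O}(h)$. To leading order, the discrepancy is therefore a sum of $L\xi$ (respectively $L$) local one-step defects. This accounts for the prefactor $L\xi$.

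The first step is to expand a single Runge--Kutta step for the linear nonautonomous problem. The stage equations are $K=(I-hA\otimes J)^{-1}(\mathbf 1\otimes J w)$ with $J$ evaluated at $t+c_ih$. The hypothesis $h\mu\max\{\|A_\mathcal{F}\|,\xi\|A_\mathcal{G}\|\}<1$ is exactly what makes the Neumann series for this inverse converge for both solvers (the coarse one with step $\xi h$). We expand the inverse to second order and Taylor-expand $J(t+c_ih)=J(t)+c_ihJ'(t)+\dots$. A fine step then gives
\[
I+hJ+h^2\bigl(b^\top A\mathbf 1\,J^2+b^\top c\,J'\bigr)+\mathcal{O}(h^3),
\]
with analogous terms in $H$ for the coarse step. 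The remaining products of $A$, $b$, $c$ are bounded crudely by $s\|b\|\|c\|\max\{\|A_\mathcal{F}\|,\xi\|A_\mathcal{G}\|\}$ using $\|J\|\le\mu$. Summed over the steps, this yields the $C_1$ term after factoring out $(h\mu)^2$.

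The second step is the comparison with the exact flow. Over one coarse interval of length $H$ the exact propagator is $I+HJ+\tfrac{H^2}{2}(J^2+J')+\mathcal{O}(H^3)$. Composing $\xi$ fine steps of size $h$ produces $\xi hJ+\xi h^2(\dots)+\binom{\xi}{2}h^2J^2$, together with drift corrections coming from $J$ being evaluated at shifted times. The mismatch between $\binom{\xi}{2}h^2$ and $\tfrac{\xi^2h^2}{2}-\tfrac{\xi h^2}{2}$, bounded through $\|J^2\|\le\mu^2$ and $\|J'\|\le\nu$, gives $C_2=\frac{\xi-1}{2}(1+\nu/\mu^2)$. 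The $b^\top c$ terms do not cancel exactly between the two schemes. The fine contribution scales like $(1+b_\mathcal{F}^\top c_\mathcal{F})(\xi-1)$ per coarse interval. The cross-scheme difference $b_\mathcal{F}^\top c_\mathcal{F}-b_\mathcal{G}^\top c_\mathcal{G}$ accumulates over the $L$ coarse steps, and the time-shift of $J'$ across the chunk adds one more interval; this gives the $\xi(L+1)$ factor in $C_3$. All constants are collected after dividing by $L\xi(h\mu)^2$, and since the bound is uniform in $t$, the supremum over $U$ and $n$ is immediate.

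The main obstacle is the bookkeeping in the second step. One must track precisely which second-order terms cancel between fine and coarse schemes, for example $b^\top\mathbf 1=1$ by consistency and $b^\top c=\tfrac12$ when $p\ge2$. One must also track how the time-dependence of $J$ feeds $\nu$ into $C_2$ and $C_3$, without silently assuming $p\ge 2$. We would first carry out the computation for the scalar autonomous-plus-linear-drift model $J(t)=J_0+tJ_1$. This identifies every coefficient exactly, after which we replace the exact coefficients by norms via the triangle inequality to obtain $C_1$, $C_2$ and $C_3$ as stated.
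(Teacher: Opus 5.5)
Your reduction is sound: linearize, telescope against the exact variational propagator, and sum the local one-step defects. The step that is supposed to produce the stated constants, however, fails.

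First, the source you give for $C_2$ is empty, because $\binom{\xi}{2}h^2=\frac{\xi^2h^2}{2}-\frac{\xi h^2}{2}$ holds identically. Second, do your own bookkeeping with the one-step Jacobian $I+hJ+h^2\,b^\top c\,(J^2+J')+\mathcal{O}(h^3)$, which is the correct linearization with stage times included, and account for the drift of $J$ across the chunk. You then get
\[
\mathcal{D}\mathcal{F}(U_n)-\mathcal{D}\mathcal{G}(U_n)=L\xi h^2\Bigl[\bigl(b_\mathcal{F}^\top c_\mathcal{F}-\tfrac12\bigr)-\xi\bigl(b_\mathcal{G}^\top c_\mathcal{G}-\tfrac12\bigr)\Bigr]\bigl(J(T_n)^2+J'(T_n)\bigr)+\mathcal{O}(h^3).
\]
This contains no factor $|1+b_\mathcal{F}^\top c_\mathcal{F}|$ and no $(L+1)$. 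Moreover, $\nu$ enters with a method-dependent coefficient, whereas in the statement $\nu$ appears only in $C_2$, with coefficient $\frac{\xi-1}{2}$. Take RK4 as the fine solver and forward Euler as the coarse one: the coefficient is $\xi/2$. At a chunk where $\|J'(T_n)\|=\nu$ and $\nu/\mu^2$ is large, your leading term therefore exceeds $L\xi(h\mu)^2(C_1+C_2+C_3)$. No use of the triangle inequality can turn your expansion into $C_1,C_2,C_3$ ``as stated''. Similarly, once you have expanded to second order, the leftover terms summed over $L\xi$ steps are $\mathcal{O}(h^3)$. They cannot generate an $(h\mu)^2$-level constant such as $C_1$.

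The missing ingredient is the paper's model of the step Jacobians. The paper replaces each step by the stability function evaluated at a Jacobian frozen at the start of the step: $\mathcal{R}_\mathcal{F}(X_i)=I_d+P_i$ with $X_i=hJ(T_n+(i-1)h)$, and $\mathcal{R}_\mathcal{G}(Y_j)=I_d+Q_j$ with $Y_j=\xi hJ(T_n+(j-1)\xi h)$. It then expands $\prod(I_d+P_i)-\prod(I_d+Q_j)$ directly as $\Delta_1+\Delta_2+\mathcal{O}(h^3)$, with no reference flow. In that model the constants arise as follows:
\begin{itemize}
\item $J'$ enters only through the sampling mismatch $\|\sum_i X_i-\sum_j Y_j\|\le Lh^2\nu\,\xi(\xi-1)/2$. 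This is the $\nu/\mu^2$ part of $C_2$.
\item $C_1$ bounds the whole Neumann tail from index $\tilde p=\min(p_\mathcal{F},p_\mathcal{G})$ onward. This is where the step-size hypothesis does real work.
\item $C_3$ is read off from $\Delta_2=(1+b_\mathcal{F}^\top c_\mathcal{F})\frac{1-\xi}{\xi}\sum_j Y_j^2+2(b_\mathcal{F}^\top c_\mathcal{F}-b_\mathcal{G}^\top c_\mathcal{G})\sum_j jY_j^2+\mathcal{O}(h^4)$ using $\|Y_j\|\le\xi h\mu$. The factor $(L+1)$ comes from $2\sum_{j=1}^L j=L(L+1)$, not from a time shift of $J'$.
\end{itemize}
To obtain the statement as written, you must adopt this frozen-Jacobian model. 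Your more faithful linearization instead yields a different bound, $L\xi h^2\bigl|\tfrac{\xi-1}{2}+b_\mathcal{F}^\top c_\mathcal{F}-\xi\,b_\mathcal{G}^\top c_\mathcal{G}\bigr|(\mu^2+\nu)+\mathcal{O}(h^3)$. Its leading term vanishes when both methods have order at least two, but it is not dominated by the stated constants in general.
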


\begin{proof}
    Let $I_\mathcal{F} = I_{s_\mathcal{F} d}$ and $I_\mathcal{G} = I_{s_\mathcal{G} d}$ be identity matrices, and $\mathbf{1}_\mathcal{F} = \mathbf{1}_{s_\mathcal{F}}$ and $\mathbf{1}_\mathcal{G} = \mathbf{1}_{s_\mathcal{G}}$ be column vectors of all ones. From the Runge--Kutta stability formula,
    \begin{align}
        \mathcal{R}_\mathcal{F}(X_i) & = I_d + (b_\mathcal{F}^\top \otimes I_d) \, (I_\mathcal{F} - A_\mathcal{F} \otimes X_i)^{-1} (\mathbf{1}_\mathcal{F} \otimes X_i) \eqcolon I_d + P_i , \\
        \mathcal{R}_\mathcal{G}(Y_j) & = I_d + (b_\mathcal{G}^\top \otimes I_d) \, (I_\mathcal{G} - A_\mathcal{G} \otimes Y_j)^{-1} (\mathbf{1}_\mathcal{G} \otimes Y_j) \eqcolon I_d + Q_j ,
    \end{align}
    where $X_i = hJ(T_n+(i-1)h)$ and $Y_j = \xi hJ(T_n+(j-1)\xi h)$. Let $D_n = \mathcal{D}\mathcal{F}(U_n) - \mathcal{D}\mathcal{G}(U_n)$. We expand the product over the time chunk into first-order ($\Delta_1$) and second-order ($\Delta_2$) difference terms:
    \begin{equation}
        D_n = \underbrace{ \sum_{i=1}^{\xi L} P_i - \sum_{j=1}^L Q_j }_{\Delta_1}\ +\ \underbrace{ \sum_{i_1=1}^{\xi L} P_{i_1} \sum_{i_2=1}^{i_1} P_{i_2} - \sum_{j_1=1}^L Q_{j_1} \sum_{j_2=1}^{j_1} Q_{j_2} }_{\Delta_2}\ +\ \mathcal{O}(h^3) .
    \end{equation}
    For $\Delta_1$, expanding the inverse via a Neumann series $(I - zA)^{-1} = \sum (zA)^k$ isolates the linear summation of the Jacobians. Using the Taylor expansion $X_i = Y_j/\xi + h^2(\xi(j-1) - (i-1))J'(\tau_{ij})$, the difference evaluates to:
    \begin{equation}
        \left\| \sum_{i=1}^{\xi L} X_i - \sum_{j=1}^L Y_j \right\| \le L h^2 \nu \frac{\xi (\xi - 1)}{2} .
    \end{equation}
    Due to the consistency of the Runge--Kutta methods, the intermediate $\mathcal{O}(h)$ terms cancel exactly. The residual truncation from the higher-order Neumann terms ($k \ge \tilde{p} = \min(p_\mathcal{F}, p_\mathcal{G})$) is bounded using Hölder's inequality, yielding $C_1$:
    \begin{equation}
        \left\| b_\mathcal{F}^\top A_\mathcal{F}^{\tilde{p}} \mathbf{1}_\mathcal{F} \sum_{i=1}^{\xi L} X_i^{\tilde{p}+1} - b_\mathcal{G}^\top A_\mathcal{G}^{\tilde{p}} \mathbf{1}_\mathcal{G} \sum_{j=1}^L Y_j^{\tilde{p}+1} \right\| \le L \xi (h \mu)^2 C_1 .
    \end{equation}

    For $\Delta_2$, the cross-multiplication of the Neumann series introduces the stability coefficients $b^\top c$. The mismatch between the fine and coarse summations expands to:
    \begin{equation}
        \Delta_2 = (1 + b_\mathcal{F}^\top c_\mathcal{F}) \, \frac{1 - \xi}{\xi} \sum_{j=1}^L Y_j^2 + 2 \, (b_\mathcal{F}^\top c_\mathcal{F} - b_\mathcal{G}^\top c_\mathcal{G}) \sum_{j=1}^L j Y_j^2 + \mathcal{O}(h^4) .
    \end{equation}
    Taking norms and substituting $\|Y_j\| \le \xi h \mu$ bounds this quadratic curvature mismatch by $L \xi (h \mu)^2 C_3 + \mathcal{O}(h^3)$.
    
    Combining the bounds for $\Delta_1$ and $\Delta_2$ yields the stated constant factors.
\end{proof}

\begin{remark}
  The $\mathcal{O}(h^2)$ scaling of the source term is a direct consequence of fixing $L$ and $\xi$. If, alternatively, one were to hold the time chunk $\Delta T$ fixed whilst $h \to 0$ (forcing $L \to \infty$), the source term would scale as $\mathcal{O}(h)$. Our limit reflects the practical scenario where the algorithmic structure of the solvers is fixed, and the grid is uniformly refined to ensure the local contraction condition is met.
\end{remark}

\begin{remark}
  The step-size condition $h < 1 / (\mu \max\{\|A_\mathcal{F}\|, \xi \|A_\mathcal{G}\|\})$ is not an artificial constraint of this proof. It is the mathematical formulation of the standard requirement that $h$ must be small enough to resolve the fastest dynamics of the system. For implicit methods, this is the exact condition that guarantees the internal stage equations have a unique, computable solution. Therefore, any well-posed numerical simulation must already meet this threshold.
\end{remark}

Each of these constants represents a distinct source of error arising from the mismatch between the fine and coarse solvers:

\begin{itemize}
    \item \textbf{Internal Stage Coupling ($C_1$):} This constant is related to the internal structure of the Runge--Kutta methods. It depends on the norms of the $A$ matrices from the Butcher tableaus. A large $\|A\|$ means the internal stages of the method are strongly coupled. This term captures the error contribution from the higher-order terms in the Neumann series expansion, which are required for implicit methods.
    \item \textbf{Jacobian Time Variation ($C_2$):} This constant represents the error that stems from the time variation of the problem's Jacobian. It depends directly on $\nu$, the upper bound on $\|J'(t)\|$. Because the coarse solver takes large steps, it only evaluates the Jacobian at a few points, whilst the fine solver evaluates it at many intermediate points. Therefore, $C_2$ quantifies the penalty incurred when the coarse solver misses the fine-grained evolution of the system's linearization over time.
    \item \textbf{Curvature Mismatch ($C_3$):} This constant represents the error originating from the mismatch in how the two solvers approximate the trajectory's curvature. The terms involving $b^\top c$ relate to a method's second-order accuracy conditions. Thus, $C_3$ captures the leading-order difference in how the coefficients of the two methods cause them to trace a curved path through phase space. Crucially, this mismatch term vanishes if the methods are deliberately chosen such that $b_\mathcal{F}^\top c_\mathcal{F} = b_\mathcal{G}^\top c_\mathcal{G}$.
\end{itemize}

This result is significant because it establishes that $\beta \sim \mathcal{O}(h^2)$. This quadratic scaling provides a powerful lever for practitioners. If the Parareal iteration stagnates (i.e., $\beta \ge 1$), halving the fine step size $h$ will reduce the error source term by a factor of approximately four. Provided the coarse solver remains stable, this eventually forces $\beta < 1$, guaranteeing convergence inside the local neighbourhood defined by the outer ball.

In practice, the bounds $\mu$ and $\nu$ are not known a priori. However, they can be estimated cheaply during the first coarse pass of the simulation, allowing the user to validate whether the chosen $h$ is likely to satisfy the contraction condition before committing to a full parallel run.

\section{Chaos-aware stopping criteria}
\label{sec:proximity}

A fundamental challenge in simulating chaotic systems is distinguishing between correctable numerical error, which Parareal can fix, and intrinsic physical divergence, which no algorithm can prevent. Standard convergence checks typically monitor the relative residual between iterations, such as $\| U_n^k - U_n^{k-1} \| / \| U_n^{k-1} \| < \epsilon$ \cite{samaddar_parallelization_2010,aubanel_scheduling_2011}. In linear or non-chaotic regimes, this proxy reliably indicates that the solution is approaching the fine trajectory $U^*$.

For chaotic systems, however, this standard criterion effectively solves the wrong problem. Trajectories originating from microscopic differences diverge at a rate $e^{\lambda t}$. As the simulation window length $T$ grows, the condition $\| U^k - U^* \| < \epsilon$ demands resolving the initial state to a precision far beyond floating-point limits, enforcing a futile pointwise convergence. Unweighted criteria fail here because they cannot tell apart numerical error from physical divergence. Consequently, standard Parareal iterations stagnate, wasting computational effort trying to enforce pointwise agreement on trajectories that physically want to diverge.

To address this, we propose replacing the pointwise error check with a \emph{proximity function} $\psi(U)$. This measure abandons the pursuit of pointwise tracking in favour of a shadowing trajectory that preserves the statistical invariant measure, leaning on the statistical lifting of the Lax equivalence theorem discussed in \cref{sec:generalities}. It accepts that errors will grow over time and weighs the discrepancies at the inter-chunk interfaces accordingly.

\subsection{The Proximity Function}

The proximity function $\psi: \mathbb{R}^{d(N+1)} \to \mathbb{R}^+$ is defined as:
\begin{equation} \label{eq:proximity_function}
    \psi(U) = \frac{1}{N} \sum_{n=1}^N \| W_n (U_n - \mathcal{F}(U_{n-1})) \| ,
\end{equation}
where $W_n$ are symmetric positive definite weight matrices. The term $U_n - \mathcal{F}(U_{n-1})$ represents the discontinuity (or jump) at the $n$-th time interface. We note that similar objective functions have been adopted to derive optimization methods related to Parareal for weather forecasting and control \cite{maday_parareal_2002, rao_adjointbased_2014, chen_adjoint_2015}.

The choice of $W_n$ dictates the physical interpretation of convergence. We propose using scalar weights $W_n = w^{-n} I_d$ that decay exponentially with time index $n$. This reflects the information loss inherent in the system: accuracy at early times (where predictability is high) is weighted heavily, while accuracy at late times (where $t \gg 1/\lambda$) is discounted.

We identify two theoretical bases for selecting the base weight $w$:

\subsubsection{Lyapunov Weighting}
The most physically natural choice is based on the maximal Lyapunov exponent $\lambda$. Since perturbations grow as $e^{\lambda t}$, we set:
\begin{equation}
    w = e^{\lambda \Delta T} .
\end{equation}
This scaling ensures that a discontinuity at time $T_n$ is penalized only if it exceeds the natural divergence expected from an error at $T_0$. While theoretically robust, $\lambda$ is an asymptotic quantity. Computing local alternatives, such as the finite-time Lyapunov exponent (FTLE) \cite{benettin_lyapunov_1980, benettin_lyapunov_1980a, dieci_numerical_2010}, or upper bounds based on the Jacobian norm \cite{li_bound_2004}, adds significantly to the overall sequential load.

\subsubsection{Lipschitz Weighting}
A more practical alternative relies on the stability properties of the solver itself. Let $\Lambda_\mathcal{F}$ be the Lipschitz constant of the fine solver over one time chunk $\Delta T$. We can avoid computing the finite-time Lyapunov exponent (FTLE) $\lambda_{\Delta T}$ altogether by bounding it directly. By definition, the maximal FTLE over a chunk satisfies:
\begin{align}
    \lambda_{\Delta T} & = \frac{1}{\Delta T} \lim_{\delta U \to 0} \log \left( \frac{\| \mathcal{F}(U + \delta U) - \mathcal{F}(U) \|}{\| \delta U \|} \right) .
\end{align}
Since the Lipschitz constant bounds the maximum possible amplification, we have $\|\mathcal{F}(U + \delta U) - \mathcal{F}(U)\| \le \Lambda_\mathcal{F} \|\delta U\|$. It immediately follows that $\lambda_{\Delta T} \le \frac{1}{\Delta T} \log(\Lambda_\mathcal{F})$, or equivalently, $e^{\lambda_{\Delta T} \Delta T} \le \Lambda_\mathcal{F}$. Therefore, we propose setting:
\begin{equation}
    w = \Lambda_\mathcal{F} .
\end{equation}
This choice is particularly attractive because $\Lambda_\mathcal{F}$ can be estimated dynamically during the Parareal iteration at negligible cost, using the secant approximation from the already-computed function evaluations \cite{wood_estimation_1996}:
\begin{equation}
    \Lambda_\mathcal{F} \approx \max_{k, n} \frac{\| \mathcal{F}(U_n^k) - \mathcal{F}(U_n^{k-1}) \|}{\| U_n^k - U_n^{k-1} \|} .
\end{equation}

\subsection{Equivalence of Norms and Scalability}

The use of weighted norms is not merely a mathematical heuristic; it encodes the fundamental physical principle of the proximity function. In a chaotic system, an early error is exponentially amplified by the dynamics, whilst a late error has little time to grow. To control the overall trajectory and keep the solver on the correct attractor, we must control early-time errors much more tightly. The weighted norm explicitly enforces this by heavily penalizing early-time errors and remaining far more tolerant of late-time discrepancies. This physically-grounded weighting allows us to define a metric in which the iteration is contractive even for long time domains. We define the weighted trajectory norm $\| U \|_W = ( \sum_{n=1}^N w^{-n} \| U_n \|^p )^{1/p}$.

The following theorem establishes that driving the proximity function $\psi(U)$ to zero is equivalent to driving the true error $\| U - U^* \|_W$ to zero, provided the weighting parameter $w$ is chosen correctly.

\begin{theorem}[Equivalence of Norms] \label{thm:equivalence}
    Let $\|\cdot\|$ be a $p$-norm. Assume $\mathcal{F}$ is Lipschitz continuous with constant $\Lambda_\mathcal{F}$. Define the stability ratio $\theta_\mathcal{F} = \Lambda_\mathcal{F} / w$. Then:
    \begin{equation} \label{eq:equivalence_error_norm}
        c \, \psi(U) \le \| U - U^* \|_W \le C_N \, \psi(U) ,
    \end{equation}
    where the constants are given by:
    \begin{equation}
        c = \frac{1}{1 + \theta_\mathcal{F}}, \quad C_N = N \cdot
        \begin{cases}
            \frac{\theta_\mathcal{F}^N - 1}{\theta_\mathcal{F} - 1} , & \theta_\mathcal{F} \ne 1 , \\
            N, & \theta_\mathcal{F} = 1 .
        \end{cases}
    \end{equation}
\end{theorem}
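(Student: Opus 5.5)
The plan is to work with the interface errors $e_n = U_n - U_n^*$ and the jumps $r_n = U_n - \mathcal{F}(U_{n-1})$, and to use $U_0 = U_0^* = u_0$ (the initial condition is enforced, so $e_0 = 0$). Since $U^*_n = \mathcal{F}(U^*_{n-1})$, we have the key identity
\begin{equation*}
    e_n = r_n + \bigl(\mathcal{F}(U_{n-1}) - \mathcal{F}(U^*_{n-1})\bigr), \qquad \|\mathcal{F}(U_{n-1}) - \mathcal{F}(U^*_{n-1})\| \le \Lambda_\mathcal{F} \|e_{n-1}\| .
\end{equation*}
Both inequalities in \cref{eq:equivalence_error_norm} come from this identity, applied in opposite directions. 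Throughout, write $\psi(U) = \frac{1}{N}\sum_n w^{-n}\|r_n\|$, and compare it with the weighted quantities $a_n = w^{-n/p}\|e_n\|$, or more simply with $w^{-n}\|e_n\|$, using $p$-norm versus $1$-norm comparisons on $\mathbb{R}^N$.

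\textbf{Lower bound.} From the identity, $\|r_n\| \le \|e_n\| + \Lambda_\mathcal{F}\|e_{n-1}\|$. Multiply by $w^{-n}$ to get $w^{-n}\|r_n\| \le w^{-n}\|e_n\| + \theta_\mathcal{F} w^{-(n-1)}\|e_{n-1}\|$. Summing over $n$ and using $e_0 = 0$ gives $N\psi(U) \le (1+\theta_\mathcal{F})\sum_n w^{-n}\|e_n\|$. It then remains to dominate $\frac{1}{N}\sum_n w^{-n}\|e_n\|$ by $\|U-U^*\|_W$. I would try Hölder/Jensen here, possibly with $w \ge 1$ used so that $w^{-n} \le w^{-n/p}$; this is where the exact weight convention of $\|\cdot\|_W$ matters.

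\textbf{Upper bound.} Unroll the recursion $\|e_n\| \le \|r_n\| + \Lambda_\mathcal{F}\|e_{n-1}\|$ with $e_0 = 0$ to get $\|e_n\| \le \sum_{m=1}^n \Lambda_\mathcal{F}^{n-m}\|r_m\|$. Weighting gives
\begin{equation*}
    w^{-n}\|e_n\| \le \sum_{m=1}^n \theta_\mathcal{F}^{n-m} w^{-m}\|r_m\| \le \Bigl(\sum_{j=0}^{N-1}\theta_\mathcal{F}^j\Bigr) N\psi(U).
\end{equation*}
The geometric sum is $(\theta_\mathcal{F}^N-1)/(\theta_\mathcal{F}-1)$, or $N$ when $\theta_\mathcal{F}=1$. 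Summing or taking a $p$-norm over $n$ and bounding crudely produces the factor $N$ appearing in $C_N$. This part is a discrete Grönwall argument and should be routine.

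\textbf{Main obstacle.} The main difficulty is the mismatch between the $\ell^1$-type, weight-$w^{-n}$ structure of $\psi$ and the $\ell^p$, weight-$w^{-n}$-inside-the-power structure of $\|\cdot\|_W$. The stated constants ($N$ in $C_N$, no $N$ in $c$) must absorb these norm comparisons. I would first verify the case $p=1$, where the two structures match exactly and the bounds above give the result with room to spare. I would then handle general $p$ via $\|x\|_p \le \|x\|_1 \le N^{1-1/p}\|x\|_p$, together with $w \ge 1$ to compare $w^{-n}$ and $w^{-n/p}$, checking that the resulting powers of $N$ fit within the constants as stated.
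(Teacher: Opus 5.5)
Your lower bound is the paper's argument. The paper also uses $e_0 = 0$ tacitly in its re-indexing; you make it explicit. You are right that the final H\"older/Jensen step needs $w \ge 1$ once $p>1$, which the paper also assumes silently.

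The upper bound has two problems. The first is fixable. If you bound each $w^{-n}\|e_n\|$ by $\bigl(\sum_{j<N}\theta_\mathcal{F}^j\bigr)N\psi(U)$ and then sum over $n$, you get $N\cdot C_N\,\psi(U)$. So even $p=1$ does not come out ``with room to spare''. You must interchange the double sum instead:
\[
\sum_{n=1}^N \sum_{m=1}^n \theta_\mathcal{F}^{n-m} w^{-m}\|r_m\| = \sum_{m=1}^N w^{-m}\|r_m\| \sum_{j=0}^{N-m}\theta_\mathcal{F}^{j} \le \frac{\theta_\mathcal{F}^N-1}{\theta_\mathcal{F}-1}\,N\psi(U),
\]
with the fraction read as $N$ when $\theta_\mathcal{F}=1$. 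This is exactly the paper's computation, and it gives $\sum_n w^{-n}\|e_n\| \le C_N\,\psi(U)$.

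The second problem is fatal: the passage to general $p$ cannot be carried out. Bounding $\|U-U^*\|_W$ requires control of $w^{-n/p}\|e_n\|$, whereas your recursion controls $w^{-n}\|e_n\|$. For $w>1$ and $p>1$ the ratio $w^{n(1-1/p)}$ is exponentially large, so the same $w \ge 1$ that rescues your lower bound ruins the upper one.

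This is not a defect of your method; the stated inequality is false in that regime. Take $U_n = U_n^*$ for $n<N$ and $U_N = U_N^* + \varepsilon v$ with $\|v\|=1$. Only the last jump is nonzero, so $\psi(U) = w^{-N}\varepsilon/N$ while $\|U-U^*\|_W = w^{-N/p}\varepsilon$. The upper bound would then require $w^{N(1-1/p)} \le \sum_{j=0}^{N-1}\theta_\mathcal{F}^j$. If $\theta_\mathcal{F}\le 1$ the right-hand side is at most $N$, so this fails for large $N$ whenever $w>1$ and $p>1$. For instance, with $p=2$, $\Lambda_\mathcal{F}=1$ and $w=4$ it fails for every $N$. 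This is precisely the stable regime the paper recommends.

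The paper's proof passes over this with the claim that $\|U-U^*\|_W \le N\psi^*(U)$ ``by the triangle inequality''. That claim holds for all $U$ only if $p=1$ or $w\le 1$. Symmetrically, the same configuration breaks the lower bound when $w<1$ and $p>1$.

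So, with $\|\cdot\|_W$ as literally defined, the theorem holds only for $p=1$ (any $w$) or $w=1$ (any $p$), and that is all your plan can deliver. The natural repair is to put the weight inside the power:
\[
\|U\|_W = \Bigl(\sum_n \|W_nU_n\|^p\Bigr)^{1/p} = \Bigl(\sum_n w^{-np}\|U_n\|^p\Bigr)^{1/p}.
\]
With $x_n = w^{-n}\|e_n\|$ one has $\frac{1}{N}\|x\|_1 \le N^{-1/p}\|x\|_p \le \|x\|_p \le \|x\|_1$. Your two recursions, with the interchanged sum, then give both bounds with the stated $c$ and $C_N$ for every $w>0$.
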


\begin{proof}
    We prove the bounds by introducing the intermediate function $\psi^*(U) = \frac{1}{N} \sum_{n=1}^N w^{-n} \| U_n - U_n^* \|$. 

    \textbf{Lower Bound:} Since $\mathcal{F}$ is Lipschitz and $U_n^* = \mathcal{F}(U_{n-1}^*)$, the triangle inequality gives:
    \begin{align}
        \psi(U) & = \frac{1}{N} \sum_{n=1}^N w^{-n} \| U_n - \mathcal{F}(U_{n-1}) \| \\
        & \le \frac{1}{N} \sum_{n=1}^N w^{-n} \bigl( \| U_n - U_n^* \| + \Lambda_\mathcal{F} \| U_{n-1} - U_{n-1}^* \| \bigr) \\
        & \le \frac{1}{N} \sum_{n=1}^N \bigl( w^{-n} + w^{-(n+1)} \Lambda_\mathcal{F} \bigr) \| U_n - U_n^* \| \\
        & = (1 + \theta_\mathcal{F}) \psi^*(U) .
    \end{align}
    By definition of the weighted trajectory norm $\|U - U^*\|_W = ( \sum_{n=1}^N w^{-n} \| U_n - U_n^* \|^p )^{1/p}$, Hölder's inequality yields $\psi^*(U) \le N^{-1/p} \| U - U^* \|_W \le \| U - U^* \|_W$. Combining these gives the lower bound constant $c$.

    \textbf{Upper Bound:} Using the triangle inequality on the $p$-norm directly, we have $\|U - U^*\|_W \le N \psi^*(U)$. Using the Lipschitz continuity of $\mathcal{F}$ recursively:
    \begin{align}
        N \psi^*(U) & \le \sum_{n=1}^N w^{-n} \sum_{j=0}^n \Lambda_\mathcal{F}^j \| U_{n-j} - \mathcal{F}(U_{n-j-1}) \| \\
        & = \sum_{n=1}^N \Lambda_\mathcal{F}^{-n} \| U_n - \mathcal{F}(U_{n-1}) \| \sum_{j=0}^{N-n} \theta_\mathcal{F}^{N-j} .
    \end{align}
    Evaluating the geometric sum $\sum_{j=0}^{N-n} \theta_\mathcal{F}^{N-j}$ and maximizing over $n \in \{1, \dots, N\}$ yields the constant $C_N$ as stated.
\end{proof}

\subsection{Scalability Regimes}

The behaviour of the constant $C_N$ in \Cref{thm:equivalence} reveals three distinct regimes for the scalability of the stopping criterion:

\begin{enumerate}
    \item \textbf{Unstable Regime ($\theta_\mathcal{F} > 1$):} If $w < \Lambda_\mathcal{F}$, the weight does not decay fast enough to counteract the error growth. $C_N$ grows exponentially with $N$. In this regime, a small $\psi(U)$ does \emph{not} guarantee a small true error, as errors at later times can be arbitrarily large. This corresponds to the standard unweighted check ($w=1$) applied to chaotic systems ($\Lambda_\mathcal{F} > 1$).
    
    \item \textbf{Critical Regime ($\theta_\mathcal{F} = 1$):} If $w = \Lambda_\mathcal{F}$, then $C_N = N^2$. The error bound grows polynomially. This is the threshold for stability.
    
    \item \textbf{Stable Regime ($\theta_\mathcal{F} < 1$):} If $w > \Lambda_\mathcal{F}$, then $C_N$ is bounded by a constant independent of $N$ as $N \to \infty$. This is the target regime for massive parallelism.
\end{enumerate}

This analysis confirms that setting $w \approx \Lambda_\mathcal{F}$ (or slightly larger) is the condition required to decouple the stopping criterion from the total number of time chunks $N$, ensuring the method remains robust even for long-time integrations.

\section{Experimental results}
\label{sec:experiments}

We evaluate the proposed framework on three test problems of increasing complexity: the scalar Logistic equation, the chaotic Lorenz-63 system, and the high-dimensional chaotic Lorenz-96 system. Our primary metrics for algorithmic scalability are:
\begin{enumerate}
    \item \textbf{Iteration Count ($K$):} The number of iterations needed to reach the stopping criterion. For strong algorithmic scalability, $K$ must remain bounded and independent of the processor count $N$.
    \item \textbf{Algorithmic Speedup Limit ($S$):} Defined as $N/K$, this ratio isolates the maximum parallel efficiency allowed by the mathematics of the method, abstracting away hardware and communication latencies.
    \item \textbf{Effective Serial Work ($K \Delta T$):} This quantifies the sequential bottleneck of the algorithm. By Amdahl's Law, true weak scaling dictates that this term remains bounded as $T \to \infty$.
    \item \textbf{Statistical Similarity ($W_1$):} We measure the 1-Wasserstein distance between the probability density functions of the fine and parallel solutions to ensure the statistical attractor is recovered.
\end{enumerate}

\emph{Implementation Scope:} To isolate the algorithmic behaviour from hardware architecture and communication latency, this section evaluates only the intrinsic mathematical limits of the framework. We purposely exclude wall-clock timings, as our goal is to validate the theoretical contraction bounds rather than benchmark a specific software stack. We report the fundamental algorithmic metrics ($K$, $S$, and $K \Delta T$) which rigorously define the method's mathematical scalability and upper bounds on parallel efficiency.

\subsection{The Logistic Equation: A Non-Chaotic Benchmark}

We first validate the framework on the logistic equation $u' = u(1-u)$, which is non-chaotic and has a smooth basin of attraction. We use Parareal with Implicit Euler for both fine and coarse solvers, setting $\xi=100$, $h=10^{-3}$, and a tolerance $\epsilon=10^{-12}$.

\begin{table}[htbp]
\renewcommand{\arraystretch}{1.2}
\centering
\caption{Simulations for the logistic equation. Comparison of the standard residual check ($\| U^k - U^{k-1} \| < \epsilon$) vs the proximity function with base weight $w=1$.}
\label{tab:logistic}
\begin{tabular}{@{}cccccccccc@{}}
\toprule
\multicolumn{2}{c}{} && \multicolumn{3}{c}{\textbf{Strong Scaling ($T$ fixed)}} && \multicolumn{3}{c}{\textbf{Weak Scaling ($T \propto N$)}} \\
\cmidrule{4-6} \cmidrule{8-10}
Check & $N$ && $K$ & $S$ & $\| \cdot \|_2$ && $K$ & $S$ & $\| \cdot \|_2$ \\
\midrule
\multirow{7}{*}{\rotatebox[origin=c]{90}{standard}} 
& 2 && 1 & 1.96 & \num{1.39e-11} && 2 & 1.31 & \num{2.48e-15} \\
& 4 && 1 & 3.85 & \num{1.59e-11} && 3 & 1.71 & \num{4.66e-15} \\
& 8 && 1 & 7.41 & \num{3.57e-11} && 3 & 2.82 & \num{2.00e-14} \\
& 16 && 3 & 4.90 & \num{1.52e-11} && 3 & 4.90 & \num{2.62e-14} \\
& 32 && 3 & 8.34 & \num{1.61e-11} && 3 & 8.34 & \num{3.17e-14} \\
& 64 && 3 & 13.21 & \num{1.65e-11} && 3 & 13.21 & \num{9.21e-12} \\
& 128 && 3 & 18.86 & \num{1.67e-11} && 3 & 18.86 & \num{1.67e-11} \\
\midrule
\multirow{7}{*}{\rotatebox[origin=c]{90}{$\psi$ ($w=1$)}} 
& 2 && 1 & 1.96 & \num{1.39e-11} && 2 & 1.31 & \num{2.48e-15} \\
& 4 && 1 & 3.85 & \num{1.59e-11} && 2 & 2.20 & \num{1.16e-13} \\
& 8 && 1 & 7.41 & \num{3.57e-11} && 2 & 3.95 & \num{4.96e-13} \\
& 16 && 2 & 7.12 & \num{1.60e-11} && 2 & 7.12 & \num{5.18e-13} \\
& 32 && 2 & 12.31 & \num{1.65e-11} && 2 & 12.31 & \num{5.21e-13} \\
& 64 && 2 & 19.67 & \num{1.67e-11} && 2 & 19.67 & \num{9.50e-12} \\
& 128 && 2 & 28.18 & \num{1.68e-11} && 2 & 28.18 & \num{1.68e-11} \\
\bottomrule
\end{tabular}
\end{table}

As shown in \Cref{tab:logistic}, for this stable problem, the standard check and the unweighted proximity function behave almost identically. The iteration count $K$ remains very low (1-3) regardless of $N$. The slight rise from 1 to 3 iterations as $N$ scales up under fixed $T$ is expected. Whilst individual chunks shrink, the total number of chunks $N$ grows. This increases the sequential error transport term (see \Cref{thm:parareal_rate_beta}) before the fine solver's local accuracy fully dominates. This confirms that for non-chaotic problems where pointwise convergence is possible, the proximity framework reduces to the standard Parareal behaviour.

\subsection{Lorenz System: The Impact of Chaos}

Next, we simulate the Lorenz-63 system, given by the equations $x' = \sigma(y-x)$, $y' = x(\rho-z)-y$, and $z' = xy-bz$, in the chaotic regime ($\sigma=10, \rho=28, b=8/3$). We use Runge--Kutta 4 (RK4) for both solvers with $h=10^{-4}$, $\xi=100$, and $\epsilon=10^{-9}$.

\begin{figure}[htbp!]
  \centering
  \includegraphics[width=0.9\textwidth]{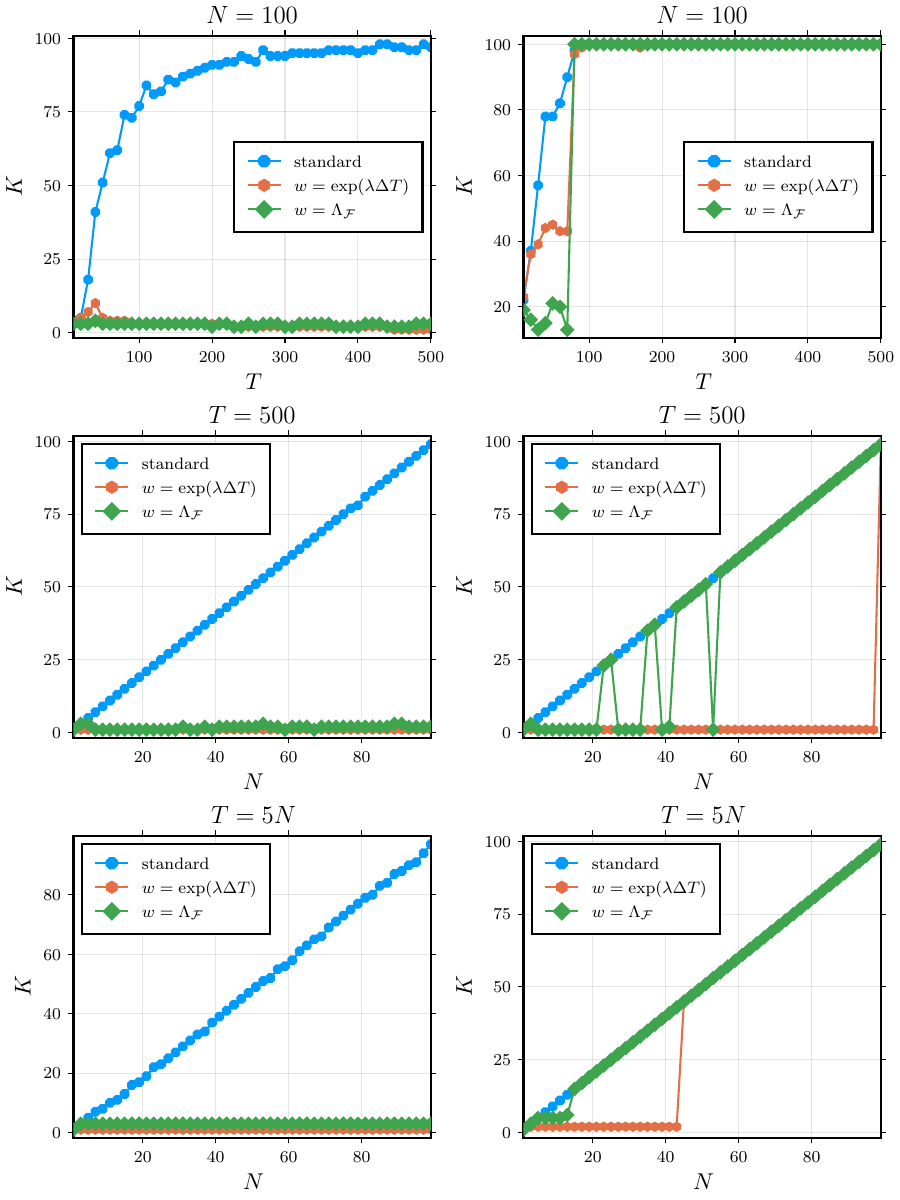}
  \caption{Number of Parareal iterations $K$ to convergence for the Lorenz system, comparing the standard check (blue) with the weighted proximity functions. Here, (left) $\xi = 10$, (right) $\xi = 100$. (top) $T$ increasing, $N$ fixed; (middle) $T$ fixed, $N$ increasing; (bottom) $T$ increasing linearly with $N$. In all scenarios, the weighted proximity functions maintain a small and nearly constant number of iterations, effectively decoupling the algorithmic cost from the problem scale. Note that the weighted curves occasionally snap back to the standard check (visible on the right); this numerical artefact occurs when a coarse update serendipitously lands close enough to the fine trajectory to briefly satisfy the strict unweighted tolerance before chaotic divergence resumes.}
  \label{fig:KvsTLorenz}
\end{figure}

\Cref{fig:KvsTLorenz} illustrates the catastrophic failure of the standard convergence check (blue line). As the integration time $T$ increases, the iteration count $K$ grows linearly until it saturates at $N$. This implies that the parallel algorithm effectively becomes serial. In contrast, the weighted proximity functions (red and green lines) maintain a constant $K \approx 2$, decoupling the cost from the problem size.

\begin{table}[htbp!]
\renewcommand{\arraystretch}{1.2}
\centering
\caption{Simulations for the Lorenz equation, running Parareal with RK4 for both fine and coarse solvers, with $h = \num{e-4}$, $\xi = 100$, and $\epsilon = \num{e-9}$. Comparison of the standard residual check ($\| U^k - U^{k-1} \| < \epsilon$) vs the proximity function $\psi$.}
\label{tab:lorenz}
\begin{tabular}{@{}cccccccccc@{}}
\toprule
\multicolumn{2}{c}{} && \multicolumn{3}{c}{\textbf{Strong Scaling ($T$ fixed)}} && \multicolumn{3}{c}{\textbf{Weak Scaling ($T \propto N$)}} \\
\cmidrule{4-6} \cmidrule{8-10}
Check & $N$ && $K$ & $S$ & $W_1(\cdot)$ && $K$ & $S$ & $W_1(\cdot)$ \\
\midrule
\multirow{7}{*}{\rotatebox[origin=c]{90}{standard}} 
& 2 && 2 & 1.31 & \num{2.03e-05} && 2 & 1.31 & \num{5.22e-04} \\
& 4 && 4 & 1.54 & \num{1.11e+00} && 4 & 1.54 & \num{0.00e+00} \\
& 8 && 8 & 1.65 & \num{1.04e+00} && 4 & 2.28 & \num{0.00e+00} \\
& 16 && 16 & 1.62 & \num{1.75e-01} && 5 & 3.15 & \num{0.00e+00} \\
& 32 && 29 & 1.49 & \num{5.89e-01} && 20 & 1.72 & \num{5.14e-04} \\
& 64 && 56 & 1.22 & \num{1.21e-01} && 47 & 1.30 & \num{2.77e-01} \\
& 128 && 112 & 0.88 & \num{3.00e-01} && 112 & 0.88 & \num{3.00e-01} \\
\midrule
\multirow{7}{*}{\rotatebox[origin=c]{90}{$\psi$ ($w=1$)}} 
& 2 && 2 & 1.31 & \num{2.03e-05} && 2 & 1.31 & \num{5.22e-04} \\
& 4 && 4 & 1.54 & \num{1.11e+00} && 3 & 1.71 & \num{0.00e+00} \\
& 8 && 8 & 1.65 & \num{1.04e+00} && 3 & 2.82 & \num{0.00e+00} \\
& 16 && 16 & 1.62 & \num{1.75e-01} && 4 & 3.80 & \num{0.00e+00} \\
& 32 && 28 & 1.50 & \num{5.89e-01} && 9 & 3.08 & \num{5.09e-04} \\
& 64 && 52 & 1.25 & \num{1.21e-01} && 39 & 1.42 & \num{2.77e-01} \\
& 128 && 102 & 0.91 & \num{3.00e-01} && 102 & 0.91 & \num{3.00e-01} \\
\midrule
\multirow{7}{*}{\rotatebox[origin=c]{90}{$\psi$ ($w=e^{\lambda\Delta T}$)}} 
& 2 && 1 & 1.96 & \num{3.61e-01} && 2 & 1.31 & \num{5.22e-04} \\
& 4 && 1 & 3.85 & \num{2.42e-01} && 3 & 1.71 & \num{0.00e+00} \\
& 8 && 2 & 3.95 & \num{1.60e+00} && 3 & 2.82 & \num{0.00e+00} \\
& 16 && 2 & 7.12 & \num{1.15e+00} && 3 & 4.90 & \num{9.45e-06} \\
& 32 && 3 & 8.34 & \num{1.12e+00} && 3 & 8.34 & \num{6.85e-01} \\
& 64 && 3 & 13.21 & \num{5.05e-01} && 3 & 13.21 & \num{1.04e-01} \\
& 128 && 3 & 18.86 & \num{6.69e-01} && 3 & 18.86 & \num{6.69e-01} \\
\midrule
\multirow{7}{*}{\rotatebox[origin=c]{90}{$\psi$ ($w=\Lambda_\mathcal{F}$)}} 
& 2 && 2 & 1.31 & \num{2.03e-05} && 2 & 1.31 & \num{5.22e-04} \\
& 4 && 1 & 3.85 & \num{2.42e-01} && 2 & 2.20 & \num{0.00e+00} \\
& 8 && 2 & 3.95 & \num{1.60e+00} && 3 & 2.82 & \num{0.00e+00} \\
& 16 && 2 & 7.12 & \num{1.15e+00} && 3 & 4.90 & \num{9.45e-06} \\
& 32 && 2 & 12.31 & \num{1.31e+00} && 2 & 12.31 & \num{5.43e-01} \\
& 64 && 2 & 19.67 & \num{6.73e-01} && 2 & 19.67 & \num{2.72e-01} \\
& 128 && 2 & 28.18 & \num{1.57e-01} && 2 & 28.18 & \num{1.57e-01} \\
\bottomrule
\end{tabular}
\end{table}

\Cref{tab:lorenz} confirms these findings quantitatively. For $N=128$, the standard check requires 112 iterations, yielding no speedup. The weighted proximity function converges in just 2 iterations. Critically, the Wasserstein distance ($W_1$) for the weighted method is $\approx 0.16$, which is comparable to the standard check ($0.30$). This indicates that the weighted method successfully recovers the correct statistical distribution without wasting iterations on pointwise accuracy.

\subsection{Lorenz-96 System: High-Dimensional Chaos}

Finally, we test the method on the Lorenz-96 system, governed by $x_i' = (x_{i+1} - x_{i-2})x_{i-1} - x_i + F$, with cyclic boundary conditions $x_{-1}=x_{D-1}$, $x_0=x_D$, and $x_{D+1}=x_1$, with dimension $D=40$ and forcing $F=8$, which exhibits extensive chaos.

\begin{table}[htbp!]
\renewcommand{\arraystretch}{1.2}
\centering
\caption{Simulations for the Lorenz-96 equation, running Parareal with RK4 for both fine and coarse solvers, with $h = \num{e-4}$, $\xi = 100$, and $\epsilon = \num{e-9}$. Comparison of the standard residual check ($\| U^k - U^{k-1} \| < \epsilon$) vs the proximity function $\psi$.}
\label{tab:lorenz96}
\begin{tabular}{@{}cccccccccc@{}}
\toprule
\multicolumn{2}{c}{} && \multicolumn{3}{c}{\textbf{Strong Scaling ($T$ fixed)}} && \multicolumn{3}{c}{\textbf{Weak Scaling ($T \propto N$)}} \\
\cmidrule{4-6} \cmidrule{8-10}
Check & $N$ && $K$ & $S$ & $W_1(\cdot)$ && $K$ & $S$ & $W_1(\cdot)$ \\
\midrule
\multirow{7}{*}{\rotatebox[origin=c]{90}{standard}} 
& 2 && 2 & 1.31 & \num{3.02e-01} && 1 & 1.96 & \num{1.19e-04} \\
& 4 && 4 & 1.54 & \num{3.27e-01} && 3 & 1.71 & \num{0.00e+00} \\
& 8 && 8 & 1.65 & \num{2.27e-01} && 4 & 2.28 & \num{0.00e+00} \\
& 16 && 16 & 1.62 & \num{1.45e-01} && 6 & 2.72 & \num{6.46e-04} \\
& 32 && 32 & 1.47 & \num{2.59e-01} && 22 & 1.64 & \num{2.04e-01} \\
& 64 && 59 & 1.21 & \num{4.18e-01} && 53 & 1.24 & \num{1.49e-01} \\
& 128 && 113 & 0.88 & \num{2.13e-01} && 113 & 0.88 & \num{2.13e-01} \\
\midrule
\multirow{7}{*}{\rotatebox[origin=c]{90}{$\psi$ ($w=1$)}} 
& 2 && 2 & 1.31 & \num{3.02e-01} && 1 & 1.96 & \num{1.19e-04} \\
& 4 && 4 & 1.54 & \num{3.27e-01} && 3 & 1.71 & \num{0.00e+00} \\
& 8 && 8 & 1.65 & \num{2.27e-01} && 3 & 2.82 & \num{0.00e+00} \\
& 16 && 16 & 1.62 & \num{1.45e-01} && 4 & 3.80 & \num{6.05e-04} \\
& 32 && 32 & 1.47 & \num{2.59e-01} && 17 & 1.90 & \num{2.01e-01} \\
& 64 && 57 & 1.22 & \num{4.18e-01} && 49 & 1.27 & \num{1.49e-01} \\
& 128 && 109 & 0.89 & \num{2.13e-01} && 109 & 0.89 & \num{2.13e-01} \\
\midrule
\multirow{7}{*}{\rotatebox[origin=c]{90}{$\psi$ ($w=\Lambda_\mathcal{F}$)}} 
& 2 && 2 & 1.31 & \num{3.02e-01} && 1 & 1.96 & \num{1.19e-04} \\
& 4 && 2 & 2.20 & \num{3.58e-01} && 2 & 2.20 & \num{0.00e+00} \\
& 8 && 4 & 2.28 & \num{2.99e-01} && 2 & 3.95 & \num{1.26e-05} \\
& 16 && 5 & 3.15 & \num{1.88e-01} && 2 & 7.12 & \num{2.73e-01} \\
& 32 && 5 & 5.17 & \num{3.38e-01} && 2 & 12.31 & \num{6.46e-01} \\
& 64 && 4 & 9.99 & \num{1.58e-01} && 2 & 19.67 & \num{2.37e-01} \\
& 128 && 2 & 28.18 & \num{3.25e-01} && 2 & 28.18 & \num{3.25e-01} \\
\bottomrule
\end{tabular}
\end{table}

The results for Lorenz-96 mirror those of the lower-dimensional system. As $N$ increases to 128, the standard check requires 113 iterations, whilst the weighted check requires only 2. This demonstrates that the proximity function is effective even for high-dimensional chaotic attractors. The algorithmic speedup limit $S$ reaches 28.2 for the weighted method, compared to 0.88 (an algorithmic slowdown) for the standard check.

\begin{figure}[htbp!]
  \centering
  \includegraphics[width=0.9\textwidth]{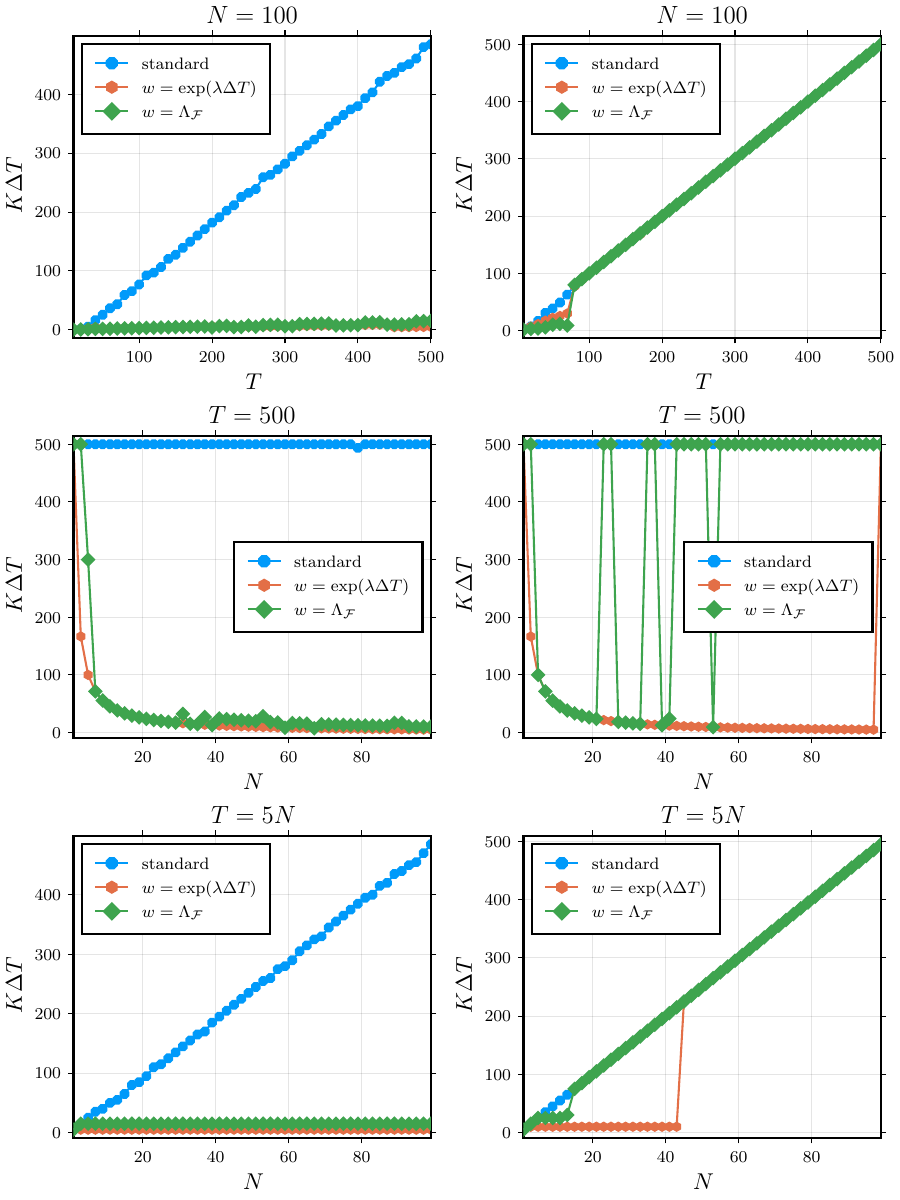}
  \caption{Effective serial work $K \Delta T$, which represents the sequential bottleneck of the Parareal algorithm. Here, (left) $\xi = 10$, (right) $\xi = 100$. (top) $T$ increasing, $N$ fixed; (middle) $T$ fixed, $N$ increasing; (bottom) $T$ increasing linearly with $N$. Provided the coarse solver is sufficiently accurate (left), the weighted methods ensure the serial portion of the algorithm does not grow with the overall problem size. However, as seen on the right, aggressive coarsening ($\xi=100$) triggers the numerical snap-back artefact, causing the sequential bottleneck to revert to the linear growth of the standard check.}
  \label{fig:SerialWork}
\end{figure}

\Cref{fig:SerialWork} provides the strongest evidence for scalability. For the weighted proximity functions, the effective serial work $K \Delta T$ remains constant as the total integration time $T$ increases. This confirms that the method effectively parallelizes the time dimension, overcoming the linear scaling barrier imposed by chaotic divergence.

\section{Conclusions}
\label{sec:conclusions}

In this work, we have addressed the fundamental challenge of applying time-parallel integration methods to chaotic dynamical systems. While algorithms like Parareal are mature for linear diffusive and dispersive problems, their application to nonlinear systems has historically been hampered by the exponential divergence of trajectories, which typically forces the iteration count to grow linearly with the simulation time.

We have presented a solution based on two theoretical pillars: a finite-time convergence framework and a physics-informed stopping criterion.

First, by modelling the time-parallel iteration as a contraction mapping, we introduced a geometric framework based on outer and inner balls. This formulation converts asymptotic convergence rates into finite-time guarantees, allowing practitioners to predict the iteration budget $K$ required to reach a tolerance $r$ given an initial error $R$. Crucially, we derived explicit bounds for the contraction factor $\beta$ for Runge--Kutta propagators, proving that $\beta \sim \mathcal{O}(h^2)$ under uniform time-grid refinement. This scaling law provides a rigorous lever for controlling convergence: even for highly nonlinear problems, a sufficiently small $h$ (often necessitated by the physics of turbulent flows) guarantees a local contraction regime.

Second, we introduced the \emph{proximity function} $\psi(U)$ to resolve the conflict between mathematical precision and physical chaos. We argued that for chaotic systems, the standard residual check enforces an impossible pointwise convergence to specific trajectories that are physically unstable. By weighting the inter-chunk discontinuities with the system's Lyapunov exponent or the solver's Lipschitz constant, the proximity function creates a convergence metric that is robust to physical divergence while ensuring the solution remains on the correct numerical attractor.

Our numerical experiments on the Lorenz and Lorenz-96 systems demonstrate that this approach yields a genuinely scalable algorithm. By using the weighted proximity criterion, we successfully decoupled the iteration count from the total simulation time, maintaining $K \approx 2$--$4$ even as the number of processors $N$ increased to 128. This contrasts sharply with standard methods, where $K$ grew linearly with $N$. Furthermore, the Wasserstein distance analysis confirmed that this relaxation of pointwise accuracy does not compromise the statistical fidelity of the solution.

\subsection{Limitations and Future Work}

While the theoretical framework provides a solid foundation, several limitations remain that motivate future research.
\begin{enumerate}
    \item \textbf{Conservative Bounds:} The derivation of $\beta$ relies on worst-case estimates of Jacobian norms. While correct in scaling terms ($\mathcal{O}(h^2)$), the predicted magnitude of $\beta$ can be overly pessimistic for specific flow regimes. Tighter bounds might be achieved by considering the average-case behaviour over the attractor.
    \item \textbf{Hardware and Communication Overheads:} This paper establishes the mathematical scalability of the algorithm. Translating these bounded iteration counts into raw wall-clock speedup introduces hardware-specific hurdles. Realizing the algorithmic limits ($S$ and $K\Delta T$) in practice demands a production-level compiled implementation to strip away communication latency and pipeline synchronization overheads. We leave this engineering challenge to future high-performance computing literature.
    \item \textbf{Hyperparameter Tuning:} The framework introduces the weighting parameter $w$. While setting $w \approx \Lambda_\mathcal{F}$ proved effective in our experiments, optimal selection for systems with varying stiffness remains an open question.
\end{enumerate}

Future work will focus on adaptive strategies. The outer--inner-ball framework suggests that fixed parameters are inefficient for non-stationary dynamics. We envision a \textit{moving window} (MoWi) approach, where the solver dynamically adapts the window length and proximity weights based on local estimates of the Lyapunov exponent, maximizing parallel efficiency as the system transitions between stable and turbulent phases. Finally, while this work focused on Parareal, the framework is sufficiently general to be extended to other multilevel methods such as MGRIT or PFASST.

\bibliographystyle{siamplain}
\bibliography{references}

@article{aubanel_scheduling_2011,
  title = {Scheduling of tasks in the parareal algorithm},
  author = {Aubanel, Eric},
  year = 2011,
  month = mar,
  journal = {Parallel Computing},
  volume = {37},
  number = {3},
  pages = {172--182},
  issn = {0167-8191},
  doi = {10.1016/j.parco.2010.10.004}
}

@article{benettin_lyapunov_1980,
  title = {Lyapunov Characteristic Exponents for smooth dynamical systems and for hamiltonian systems; A method for computing all of them. Part 2: Numerical application},
  shorttitle = {Lyapunov Characteristic Exponents for smooth dynamical systems and for hamiltonian systems; A method for computing all of them. Part 2},
  author = {Benettin, Giancarlo and Galgani, Luigi and Giorgilli, Antonio and Strelcyn, Jean-Marie},
  year = 1980,
  month = mar,
  journal = {Meccanica},
  volume = {15},
  number = {1},
  pages = {21--30},
  issn = {1572-9648},
  doi = {10.1007/BF02128237},
  langid = {english}
}

@article{benettin_lyapunov_1980a,
  title = {Lyapunov Characteristic Exponents for smooth dynamical systems and for hamiltonian systems; a method for computing all of them. Part 1: Theory},
  shorttitle = {Lyapunov Characteristic Exponents for smooth dynamical systems and for hamiltonian systems; a method for computing all of them. Part 1},
  author = {Benettin, Giancarlo and Galgani, Luigi and Giorgilli, Antonio and Strelcyn, Jean-Marie},
  year = 1980,
  month = mar,
  journal = {Meccanica},
  volume = {15},
  number = {1},
  pages = {9--20},
  issn = {1572-9648},
  doi = {10.1007/BF02128236},
  langid = {english}
}

@techreport{cartis_new_2007,
  title = {A new perspective on the complexity of interior point methods for linear programming},
  author = {Cartis, C. and Hauser, R.},
  year = 2007,
  month = mar,
  institution = {University of Oxford},
  url = {https://ora.ox.ac.uk/objects/uuid:7c3c586c-7c90-467b-b5a3-8a19b7fe8e6d},
  urldate = {2025-01-23},
  langid = {english}
}

@article{chen_adjoint_2015,
  title = {An Adjoint Approach for Stabilizing the Parareal Method},
  author = {Chen, Feng and Hesthaven, Jan S. and Maday, Yvon and Nielsen, Allan Svejstrup},
  year = 2015,
  month = sep,
  journal = {Comptes Rendus de l'Académie des Sciences},
  series = {Série A, Sciences Mathématiques},
  issn = {0249-6291},
  langid = {english}
}

@article{dembo_inexact_1982,
  title = {Inexact Newton Methods},
  author = {Dembo, Ron S. and Eisenstat, Stanley C. and Steihaug, Trond},
  year = 1982,
  journal = {SIAM Journal on Numerical Analysis},
  volume = {19},
  number = {2},
  eprint = {https://doi.org/10.1137/0719025},
  pages = {400--408},
  doi = {10.1137/0719025}
}

@article{dieci_numerical_2010,
  title = {Numerical Techniques for Approximating Lyapunov Exponents and Their Implementation},
  author = {Dieci, Luca and Jolly, Michael S. and Van Vleck, Erik S.},
  year = 2010,
  month = sep,
  journal = {Journal of Computational and Nonlinear Dynamics},
  volume = {6},
  number = {1},
  pages = {011003},
  issn = {1555-1415},
  doi = {10.1115/1.4002088}
}

@article{gander_analysis_2007,
  title = {Analysis of the Parareal Time‐Parallel Time‐Integration Method},
  author = {Gander, Martin J. and Vandewalle, Stefan},
  year = 2007,
  month = jan,
  journal = {SIAM Journal on Scientific Computing},
  volume = {29},
  number = {2},
  pages = {556--578},
  publisher = {{Society for Industrial and Applied Mathematics}},
  issn = {1064-8275},
  doi = {10.1137/05064607X}
}

@inproceedings{gander_nonlinear_2008,
  title = {Nonlinear Convergence Analysis for the Parareal Algorithm},
  booktitle = {Domain Decomposition Methods in Science and Engineering XVII},
  author = {Gander, Martin J. and Hairer, Ernst},
  editor = {Langer, Ulrich and Discacciati, Marco and Keyes, David E. and Widlund, Olof B. and Zulehner, Walter},
  year = 2008,
  series = {Lecture Notes in Computational Science and Engineering},
  volume = {60},
  pages = {45--56},
  publisher = {Springer},
  address = {Berlin, Heidelberg},
  doi = {10.1007/978-3-540-75199-1_4},
  isbn = {978-3-540-75199-1},
  langid = {english}
}

@article{li_bound_2004,
  title = {On the bound of the Lyapunov exponents for continuous systems},
  author = {Li, Changpin and Xia, Xiaohua},
  year = 2004,
  month = sep,
  journal = {Chaos: An Interdisciplinary Journal of Nonlinear Science},
  volume = {14},
  number = {3},
  pages = {557--561},
  issn = {1054-1500},
  doi = {10.1063/1.1768911}
}

@article{lions_resolution_2001,
  title = {Résolution d'EDP par un schéma en temps «pararéel»},
  author = {Lions, Jacques-Louis and Maday, Yvon and Turinici, Gabriel},
  year = 2001,
  month = apr,
  journal = {Comptes Rendus de l'Académie des Sciences},
  series = {Série I - Mathematics},
  volume = {332},
  number = {7},
  pages = {661--668},
  issn = {0764-4442},
  doi = {10.1016/S0764-4442(00)01793-6}
}

@article{lorenz_reply_2008,
  title = {Reply to comment by L.-S. Yao and D. Hughes},
  author = {Lorenz, E. N.},
  year = 2008,
  month = may,
  journal = {Tellus A: Dynamic Meteorology and Oceanography},
  volume = {60},
  number = {4},
  pages = {806--807},
  doi = {10.1111/j.1600-0870.2007.00302.x}
}

@article{maday_parareal_2002,
  title = {A parareal in time procedure for the control of partial differential equations},
  author = {Maday, Yvon and Turinici, Gabriel},
  year = 2002,
  journal = {Comptes Rendus. Mathématique},
  volume = {335},
  number = {4},
  pages = {387--392},
  issn = {1778-3569},
  doi = {10.1016/S1631-073X(02)02467-6},
  langid = {english}
}

@article{mcdonald_fractal_1985,
  title = {Fractal basin boundaries},
  author = {McDonald, Steven W. and Grebogi, Celso and Ott, Edward and Yorke, James A.},
  year = 1985,
  month = oct,
  journal = {Physica D: Nonlinear Phenomena},
  volume = {17},
  number = {2},
  pages = {125--153},
  issn = {0167-2789},
  doi = {10.1016/0167-2789(85)90001-6},
  urldate = {2026-03-31}
}

@book{ortega_iterative_1970,
  title = {Iterative Solution of Nonlinear Equations in Several Variables},
  author = {Ortega, James M. and Rheinboldt, Werner C.},
  year = 1970,
  publisher = {Academic Press},
  isbn = {978-0-12-528550-6},
  langid = {english}
}

@article{rao_adjointbased_2014,
  title = {An adjoint-based scalable algorithm for time-parallel integration},
  author = {Rao, Vishwas and Sandu, Adrian},
  year = 2014,
  month = mar,
  journal = {Journal of Computational Science},
  series = {Empowering Science through Computing + BioInspired Computing},
  volume = {5},
  number = {2},
  pages = {76--84},
  issn = {1877-7503},
  doi = {10.1016/j.jocs.2013.03.004}
}

@misc{references,
  urldate = {2025-12-05},
  howpublished = {http://parallel-in-time.org/references/}
}

@article{samaddar_parallelization_2010,
  title = {Parallelization in time of numerical simulations of fully-developed plasma turbulence using the parareal algorithm},
  author = {Samaddar, D. and Newman, D. E. and Sánchez, R.},
  year = 2010,
  month = sep,
  journal = {Journal of Computational Physics},
  volume = {229},
  number = {18},
  pages = {6558--6573},
  issn = {0021-9991},
  doi = {10.1016/j.jcp.2010.05.012}
}

@inproceedings{staff_stability_2005,
  title = {Stability of the Parareal Algorithm},
  booktitle = {Domain Decomposition Methods in Science and Engineering},
  author = {Staff, Gunnar Andreas and Rønquist, Einar M.},
  editor = {Barth, Timothy J. and Griebel, Michael and Keyes, David E. and Nieminen, Risto M. and Roose, Dirk and Schlick, Tamar and Kornhuber, Ralf and Hoppe, Ronald and Périaux, Jacques and Pironneau, Olivier and Widlund, Olof and Xu, Jinchao},
  year = 2005,
  pages = {449--456},
  publisher = {Springer},
  address = {Berlin, Heidelberg},
  doi = {10.1007/3-540-26825-1_46},
  isbn = {978-3-540-26825-3},
  langid = {english}
}

@book{suli_introduction_2003,
  title = {An Introduction to Numerical Analysis},
  author = {Süli, Endre and Mayers, David F.},
  year = 2003,
  publisher = {Cambridge University Press},
  address = {Cambridge},
  doi = {10.1017/CBO9780511801181},
  urldate = {2025-01-23},
  isbn = {978-0-521-00794-8},
  langid = {english}
}

@article{wood_estimation_1996,
  title = {Estimation of the Lipschitz constant of a function},
  author = {Wood, G. R. and Zhang, B. P.},
  year = 1996,
  month = jan,
  journal = {Journal of Global Optimization},
  volume = {8},
  number = {1},
  pages = {91--103},
  issn = {1573-2916},
  doi = {10.1007/BF00229304},
  langid = {english}
}

\end{document}